\DeclareMathOperator{\Max}{Max}
\DeclareRobustCommand*{\tautequiv}{\Relbar\joinrel\mathrel|\mathrel|\joinrel\Relbar}
\newtheorem{theorem}{Theorem}[section]
\newtheorem{definition}[theorem]{Definition}
\newtheorem{lemma}[theorem]{Lemma}
\newtheorem{proposition}[theorem]{Proposition}
\title{The logic induced by effect algebras}
\author{Ivan~Chajda, Radom\'ir~Hala\v s and Helmut~L\"anger}
\date{}
\begin{document}
\footnotetext[1]{Support of the research by \"OAD, project CZ~02/2019, support of the research of the first and second author by IGA, project P\v rF~2020~014, and support of the research of the first and third author by the Austrian Science Fund (FWF), project I~4579-N, and the Czech Science Foundation (GA\v CR), project 20-09869L, is gratefully acknowledged.}
\maketitle
\begin{abstract}
Effect algebras form an algebraic formalization of the logic of quantum mechanics. For lattice effect algebras $\mathbf E$ we investigate a natural implication and prove that the implication reduct of $\mathbf E$ is term equivalent to $\mathbf E$. Then we present a simple axiom system in Gentzen style in order to axiomatize the logic induced by lattice effect algebras. For effect algebras which need not be lattice-ordered we introduce a certain kind of implication which is everywhere defined but whose result need not be a single element. Then we study effect implication algebras and prove the correspondence between these algebras and effect algebras satisfying the Ascending Chain Condition. We present an axiom system in Gentzen style also for not necessarily lattice-ordered effect algebras and prove that it is an algebraic semantics for the logic induced by finite effect algebras.
\end{abstract}

{\bf AMS Subject Classification:} 03G12, 03G25, 06A11, 06F99

{\bf Keywords:} Lattice effect algebra, lattice effect implication algebra, effect algebra, effect implication algebra, finite effect algebra, Gentzen system, algebraic semantics

\section{Introduction}

Effect algebras were introduced by D.~Foulis and M.~K.~Bennett (\cite{FB}, see also \cite{BF} and \cite{BH}) as an algebraic axiomatization of the logic of quantum mechanics. For enabling deductions and derivations in this logic, it is necessary to introduce the connective implication. The problem is that though the binary operation of an effect algebra is only partial, implication should be defined everywhere. If the considered effect algebra is lattice-ordered then implication is usually defined by $x\rightarrow y:=x'+(x\wedge y)$ and called {\em Sasaki implication}, see e.g.\ \cite{BDS}, \cite{CLa}, \cite{FP} and \cite{RSS}. Properties of this implication were described in these papers and a certain axiomatization of the corresponding logic in Gentzen style (see e.g.\ \cite{BP}) was derived in \cite{RSS}. However, non-lattice-ordered effect algebras are more important then lattice-ordered ones. As shown in \cite{CHK}, any effect algebra can be completed to a {\em basic algebra} which is total. For commutative basic algebras a Gentzen system was presented in \cite{BH} and for basic algebras in \cite C. This motivates us to find such an axiomatization also for not necessarily lattice-ordered effect algebras.

Another reason for introducing implication in effect algebras is to show that this connective is related to conjunction via (left) adjointness and hence effect algebras can be considered as left residuated structures, see \cite{CLa} and \cite{CLb}.

The paper is organized as follows: First we introduce the {\em natural implication} instead of the Sasaki implication. Then we define lattice effect implication algebras having this type of implication together with one constant as fundamental operations. We prove that lattice effect algebras and lattice effect implication algebras are term equivalent and in a natural one-to-one correspondence. Then we derive an algebraic semantics for lattice effect implication algebras. In the second part of the paper we extend our investigations to not necessarily lattice-ordered effect algebras satisfying the Ascending Chain Condition, in particular to finite effect algebras. Also in this case we characterize the operation of implication in a similar way as it was done in the lattice case. Finally, we provide an algebraic semantics of effect implication algebras.

For concepts and results concerning effect algebras the reader is referred to the monograph \cite{DP} by A.~Dvure\v censkij and S.~Pulmannov\'a. The following definition and lemma are taken from \cite{DP}

\begin{definition}
An {\em effect algebra} is a partial algebra $\mathbf E=(E,+,{}',0,1)$ of type $(2,1,0,0)$ where $(E,{}',0,1)$ is an algebra and $+$ is a partial operation satisfying the following conditions for all $x,y,z\in E$:
\begin{enumerate}[{\rm(E1)}]
\item if $x+y$ is defined then so is $y+x$ and $x+y=y+x$,
\item $(x+y)+z$ is defined if and only if so is $x+(y+z)$, and in this case $(x+y)+z=x+(y+z)$,
\item $x+y=1$ if and only if $y=x'$.
\item if $1+x$ is defined then $x=0$.
\end{enumerate}
On $E$ a binary relation $\leq$ can be defined by
\[
x\leq y\text{ if there exists some }z\in E\text{ with }x+z=y
\]
{\rm(}$x,y\in E${\rm)}. Then $(E,\leq,0,1)$ becomes a bounded poset and $\leq$ is called the {\em induced order} of $\mathbf E$. If $(E,\leq)$ is a lattice then $\mathbf E$ is called a lattice effect algebra.
\end{definition}

\begin{lemma}\label{lem1}
If $(E,+,{}',0,1)$ is an effect algebra, $\leq$ its induced order and $a,b,c\in E$ then
\begin{enumerate}[{\rm(i)}]
\item $a\leq b$ implies $b'\leq a'$,
\item $a''=a$,
\item $a+b$ is defined if and only if $a\leq b'$,
\item if $a\leq b$ and $b+c$ is defined then $a+c$ is defined and $a+c\leq b+c$,
\item if $a+c$ and $b+c$ are defined then $a+c\leq b+c$ if and only if $a\leq b$,
\item if $a\leq b$ then $a+(a+b')'=b$ and $(b'+(b'+a)')'=a$,
\item $a+0=a$,
\item $0'=1$ and $1'=0$.
\end{enumerate}
\end{lemma}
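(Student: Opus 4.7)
The plan is to use (E3), which uniquely characterizes complements via the equation $x+y=1$, as the workhorse, together with (E1) and (E2) for commuting and reassociating partial sums. Virtually every item will be obtained by engineering an equation of the form $x+y=1$ and reading off $y=x'$ via (E3). Because the items are logically interdependent, I would proceed in the following order. First (viii): (E3) with $x:=1$ makes $1+1'$ defined and equal to $1$, so (E4) forces $1'=0$. Next (ii): $a+a'=1$ by (E3), commuted to $a'+a=1$ by (E1), whence (E3) applied to $a'$ gives $a=a''$; combined with $1'=0$ this yields $0'=1$, finishing (viii). For (i), from $a+w=b$ one obtains $(a+w)+b'=b+b'=1$; (E2) reassociates to $a+(w+b')=1$, so (E3) gives $w+b'=a'$, and (E1) exhibits $b'\le a'$. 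For (iii), the forward direction expands $(a+b)+(a+b)'=1$ via (E2) to $a+(b+(a+b)')=1$, applies (E3) to obtain $b\le a'$, then invokes (i) and (ii); the converse is symmetric. Item (iv) is obtained by rewriting $b+c=(a+w)+c$ as $(a+c)+w$ through (E1) and (E2), simultaneously establishing that $a+c$ is defined and that $a+c\le b+c$.

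The remaining items hinge on a cancellation lemma (not in the statement but readily derived): if $x+c=y+c=u$, then $(x+c)+u'=1$ reassociates by (E2) to $x+(c+u')=1$, so (E3) forces $c+u'=x'$; symmetrically $c+u'=y'$, whence $x'=y'$ and (ii) yields $x=y$. Cancellation immediately gives the forward direction of (v) from the same reassociation used in (iv). For the first identity of (vi), set $d:=a+b'$, which is defined by (iii) since $a\le b=b''$; expanding $d+d'=1$ produces $b'+d'=a'$, while $a+z=b$ yields $(a+z)+b'=1$ and hence $b'+z=a'$. Cancellation now forces $z=d'$, i.e.\ $a+(a+b')'=b$. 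The second identity of (vi) follows by applying the first to $b'\le a'$ (provided by (i)) and using (ii). Finally, (vii) is proved directly: $0+1=0+0'=1$ by (E3) and (viii), so by (E2), $(0+a)+a'=0+(a+a')=0+1=1$; (E3) together with (ii) gives $0+a=a$, whence (E1) yields $a+0=a$.

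The one step requiring genuine care is the bookkeeping of definedness in the partial-operation setting: every application of (E2) demands that the pertinent sums (e.g.\ $0+1$, $a+b'$, $c+u'$) be verified defined beforehand, which is precisely where items (i) and (iii) do real work. The order of the argument also matters in a subtle way — for instance, attempting to derive (vii) as a specialization of (vi) with $b:=a$ would be circular, since the reflexivity $a\le a$ is itself a consequence of (vii).
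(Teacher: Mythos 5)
Your proof is correct, but note that the paper itself contains no proof of this lemma: it is quoted from the monograph \cite{DP}, so there is no in-paper argument to compare against. What you give is the standard derivation from (E1)--(E4), and your bookkeeping is sound: the order (viii), (ii), (i), (iii), (iv), cancellation, (v), (vi), (vii) avoids circularity, the cancellation lemma is exactly the right auxiliary tool for (v) and (vi), and your closing observation that (vii) cannot be obtained from (vi) with $b:=a$ (since reflexivity of $\leq$ itself needs $a+0=a$) is accurate. The one place you gloss is the converse of (iii), which is not literally ``symmetric'': from $a+z=b'$ you must first commute to $z+a=b'$ and only then reassociate $(z+a)+b$ to $z+(a+b)$ via (E2) to conclude that $a+b$ is defined; reassociating $(a+z)+b$ directly only yields that $z+b$ and $a+(z+b)$ are defined. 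Since this commute-then-reassociate move is precisely what you perform in (iv) and (v), this is a presentational shortcut rather than a genuine gap. You might also state explicitly that the backward direction of (v) is just (iv); as written it is left implicit.
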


\section{Lattice effect implication algebras}

For lattice effect algebras $(E,+,{}',0,1)$, the {\em Sasaki implication} (called also {\em material implication} in [3]) was introduced as follows:
\[
x\rightarrow y:=x'+(x\wedge y)
\]
for all $x,y\in E$. For our sake, we will consider this in the following way:
\[
x\rightarrow y:=y+(x\vee y)'=y+(x'\wedge y')
\]
for all $x,y\in E$ and call this kind of implication {\em natural implication}. The reason for this choice is that it turns out that some computations become more feasible with this natural implication than with the Sasaki one.

In the rest of our paper, the implication investigated in lattice effect algebras will be the natural one.

\begin{theorem}\label{th3}
Let $(E,+,{}',0,1)$ be a lattice effect algebra and $a,b,c\in E$. Then
\begin{enumerate}[{\rm(i)}]
\item $a\leq b$ if and only if $a\rightarrow b=1$,
\item if $a\leq b'$ then $a+b=a'\rightarrow b$,
\item if $a\geq b$ then $a\rightarrow b=a'+b$,
\item if $a\geq b$ then $a\rightarrow b=b'\rightarrow a'$,
\item if $a\leq b$ then $b\rightarrow c\leq a\rightarrow c$.
\end{enumerate}
\end{theorem}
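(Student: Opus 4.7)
The general plan is to unfold the definition $x\rightarrow y=y+(x\vee y)'$ in each case and exploit the fact that when two elements are comparable, their join or meet collapses to one of them, so the definition simplifies dramatically. Throughout, Lemma~\ref{lem1} supplies the standard manipulations of $+$, $'$ and $\leq$ that I will need.

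For (i), if $a\leq b$ then $a\vee b=b$, so $a\rightarrow b=b+b'=1$ by (E3). Conversely, $a\rightarrow b=1$ means $b+(a\vee b)'=1$, whence the uniqueness part of (E3) forces $(a\vee b)'=b'$, and taking $'$ again and using Lemma~\ref{lem1}(ii) gives $a\vee b=b$, i.e., $a\leq b$. For (ii), the hypothesis $a\leq b'$ is equivalent to $b\leq a'$ (Lemma~\ref{lem1}(i),(ii)), so $a'\vee b=a'$ and thus $a'\rightarrow b=b+(a'\vee b)'=b+a=a+b$ (the last equality by (E1), which also ensures $a+b$ is defined via Lemma~\ref{lem1}(iii)). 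For (iii), $a\geq b$ gives $a\vee b=a$, hence $a\rightarrow b=b+a'=a'+b$; note $a'+b$ is defined because $b\leq a=(a')'$, again via Lemma~\ref{lem1}(iii). For (iv), combine (iii) with the dual computation: $a\geq b$ yields $a'\leq b'$, so $b'\vee a'=b'$ and $b'\rightarrow a'=a'+(b'\vee a')'=a'+b$, which matches $a\rightarrow b$ from (iii).

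For (v), I will use monotonicity of $+$ from Lemma~\ref{lem1}(iv). From $a\leq b$ one has $a\vee c\leq b\vee c$, and passing to complements by Lemma~\ref{lem1}(i) gives $(b\vee c)'\leq(a\vee c)'$. Both expressions $c+(b\vee c)'$ and $c+(a\vee c)'$ are defined since $(x\vee c)'\leq c'$ for $x\in\{a,b\}$ (Lemma~\ref{lem1}(iii)); therefore Lemma~\ref{lem1}(iv) yields $b\rightarrow c=c+(b\vee c)'\leq c+(a\vee c)'=a\rightarrow c$.

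No step looks genuinely hard: each part reduces to a one-line computation once one notices the collapse of the join. The only minor nuisance is bookkeeping the definedness of the various sums, which is handled uniformly by Lemma~\ref{lem1}(iii). The most delicate item is (ii), where one must carefully keep track of which of $a$, $a'$ and their $+$-partners is on which side; I expect this to be the place where a reader might stumble, so in the actual proof I would write that step out most explicitly.
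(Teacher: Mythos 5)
Your proposal is correct and follows essentially the same route as the paper: unfold $x\rightarrow y=y+(x\vee y)'$, use comparability to collapse the join (via (E3) and the involution for (i), and monotonicity of $+$ from Lemma~\ref{lem1} for (v)). The only cosmetic difference is that you compute (iii) and (iv) directly from the definition, whereas the paper chains them through (ii); your extra attention to definedness of the sums is fine but not a substantive departure.
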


\begin{proof}
\
\begin{enumerate}[(i)]
\item The following are equivalent:
\begin{align*}
             a & \leq b, \\
       a\vee b & =b, \\
    (a\vee b)' & =b', \\
  b+(a\vee b)' & =1, \\
a\rightarrow b & =1.
\end{align*}
\item If $a\leq b'$ then $a'\rightarrow b=b+(a'\vee b)'=b+a''=b+a=a+b$.
\item If $a\geq b$ then $a\rightarrow b=a''\rightarrow b=a'+b$.
\item If $a\geq b$ then $a\rightarrow b=a'+b=b+a'=b''+a'=b'\rightarrow a'$.
\item If $a\leq b$ then $b\rightarrow c=c+(b\vee c)'\leq c+(a\vee c)'=a\rightarrow c$.
\end{enumerate}
\end{proof}

Concerning the reduct restricted to $\rightarrow$ and $\vee$, we can prove the following.

\begin{theorem}\label{th4}
Let $(E,+,{}',0,1)$ be a lattice effect algebra. Then the following identities hold in $(E,\vee,\rightarrow,{}',0,1)$:
\begin{enumerate}[{\rm(i)}]
\item $x\rightarrow0\approx x'$,
\item $1\rightarrow x\approx x$,
\item $x\rightarrow(y\rightarrow x)\approx1$,
\item $(x\rightarrow y)\rightarrow y\approx x\vee y$,
\item $((x\rightarrow y)\rightarrow y)\rightarrow y\approx x\rightarrow y$,
\item $x\rightarrow((x\rightarrow y)\rightarrow y)\approx1$,
\item $y\rightarrow((x\rightarrow y)\rightarrow y)\approx1$,
\item $y'\rightarrow((x\rightarrow y)\rightarrow y)'\approx x\rightarrow y$.
\end{enumerate}
\end{theorem}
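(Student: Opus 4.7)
The plan is to verify the eight identities by direct computation, using the definition $x\rightarrow y = y + (x\vee y)'$, the basic properties in Lemma~\ref{lem1}, and the consequences already established in Theorem~\ref{th3}. Identities (i) and (ii) are immediate: $x\rightarrow 0 = 0+(x\vee 0)' = x'$ using $a+0=a$, and $1\rightarrow x = x+(1\vee x)' = x+0 = x$ using $1'=0$.

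The key preliminary observation I would establish before tackling (iii)--(vii) is the monotonicity fact $y \leq x\rightarrow y$. This follows from $y = 0+y \leq (x\vee y)'+y = x\rightarrow y$ via Lemma~\ref{lem1}(iv), once one notes that $(x\vee y)'+y$ is defined because $y\leq x\vee y$ gives $(x\vee y)'\leq y'$, so Lemma~\ref{lem1}(iii) applies. Combined with Theorem~\ref{th3}(i) this immediately gives (iii) with the roles of $x,y$ swapped.

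The genuinely nontrivial step is (iv), and I expect this to be the main obstacle. Since $y\leq x\rightarrow y$, we have $(x\rightarrow y)\vee y = x\rightarrow y$, so the definition unfolds as
\[
(x\rightarrow y)\rightarrow y \;=\; y + (x\rightarrow y)' \;=\; y + \bigl(y + (x\vee y)'\bigr)'.
\]
The right-hand side is exactly the expression appearing in Lemma~\ref{lem1}(vi) with $a=y$ and $b=x\vee y$ (noting $a\leq b$), which evaluates to $x\vee y$. Spotting that the shape of the nested expression matches this lemma is the only real insight in the theorem; once seen, (iv) is a single line.

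With (iv) in hand, the remaining items fall out by substitution. Identities (vi) and (vii) reduce to $x\leq x\vee y$ and $y\leq x\vee y$ respectively, combined with Theorem~\ref{th3}(i). For (v), after replacing $(x\rightarrow y)\rightarrow y$ by $x\vee y$, the left-hand side becomes $(x\vee y)\rightarrow y = y + ((x\vee y)\vee y)' = y+(x\vee y)' = x\rightarrow y$. For (viii), (iv) rewrites the term as $y'\rightarrow (x\vee y)'$; since $(x\vee y)'\leq y'$, Theorem~\ref{th3}(iv) turns this into $(x\vee y)\rightarrow y$, which was just shown to equal $x\rightarrow y$. Everything beyond (iv) is essentially bookkeeping.
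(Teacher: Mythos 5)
Your proposal is correct and follows essentially the same route as the paper: direct computation from the definition, with Lemma~\ref{lem1}(vi) applied to $a=y$, $b=x\vee y$ as the key step for (iv), and the remaining items obtained by substitution (your use of Theorem~\ref{th3}(iv) instead of (iii) in (viii), and your direct evaluation of $(x\vee y)\rightarrow y$ in (v), are only cosmetic variations). The only difference is that you make explicit the observation $y\leq x\rightarrow y$ and its definedness justification, which the paper leaves implicit.
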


\begin{proof}
We have
\begin{enumerate}[(i)]
\item $x\rightarrow0\approx0+(x\vee0)'\approx x'$,
\item $1\rightarrow x\approx x+(1\vee x)'\approx x$,
\item $x\leq x+(y\vee x)'=y\rightarrow x$,
\item $(x\rightarrow y)\rightarrow y\approx y+((y+(x\vee y)')\vee y)'\approx y+(y+(x\vee y)')'\approx x\vee y$ according to (vi) of Lemma~\ref{lem1},
\item $((x\rightarrow y)\rightarrow y)\rightarrow y\approx(x\rightarrow y)\vee y\approx x\rightarrow y$ according to (i) of Theorem~\ref{th3} and (iii),
\item $x\leq x\vee y=(x\rightarrow y)\rightarrow y$ according to (iv),
\item $y\leq x\vee y=(x\rightarrow y)\rightarrow y$ according to (iv),
\item $y'\rightarrow((x\rightarrow y)\rightarrow y)'\approx y'\rightarrow(x\vee y)'\approx y''+(x\vee y)'\approx y+(x\vee y)'\approx x\rightarrow y$ according to (iv), to (iii) of Theorem~\ref{th3} and to (ii) of Lemma~\ref{lem1}.
\end{enumerate}
\end{proof}

Summarizing the above properties, we can introduce an alter ego of a lattice effect algebra, i.e.\ its implication version.

\begin{definition}\label{def1}
A {\em lattice effect implication algebra} is an algebra $(I,\rightarrow,0)$ of type $(2,0)$ satisfying the following conditions for all $x,y,z\in I$ {\rm(}we abbreviate $x\rightarrow0$ by $x'$ and $0'$ by $1${\rm)}:
\begin{enumerate}[{\rm(i)}]
\item $0\rightarrow x\approx x\rightarrow x\approx x\rightarrow1\approx1$,
\item if $x\rightarrow y=y\rightarrow x=1$ then $x=y$,
\item if $x\rightarrow y=y\rightarrow z=1$ then $x\rightarrow z=1$,
\item if $x\rightarrow y=1$ then $y'\rightarrow x'=1$,
\item $x''\approx x$,
\item $x\rightarrow((x\rightarrow y)\rightarrow y)\approx1$,
\item $y\rightarrow((x\rightarrow y)\rightarrow y)\approx1$,
\item if $x\rightarrow z=y\rightarrow z=1$ then $((x\rightarrow y)\rightarrow y)\rightarrow z=1$,
\item if $x\rightarrow y=1$ then $y\rightarrow x=x'\rightarrow y'$,
\item $x\rightarrow y'=(x'\rightarrow y)\rightarrow z'=1$ if and only if $y\rightarrow z'=x\rightarrow(y'\rightarrow z)'=1$, and in this case $(x'\rightarrow y)'\rightarrow z=x'\rightarrow(y'\rightarrow z)$,
\item $y'\rightarrow((x\rightarrow y)\rightarrow y)'\approx x\rightarrow y$,
\item $x\rightarrow(y\rightarrow x)\approx1$.
\end{enumerate}
\end{definition}

In order to show the correspondence between a lattice effect algebra and the implication version, we state and prove the following three theorems.

\begin{theorem}\label{th1}
Let $\mathbf E=(E,+,{}',0,1)$ be a lattice effect algebra with induced order $\leq$ and lattice operations $\vee$ and $\wedge$ and put
\[
x\rightarrow y:=y+(x\vee y)'
\]
for all $x,y\in E$. Then $\mathbb{IL}(\mathbf E):=(E,\rightarrow,0)$ is a lattice effect implication algebra where
\begin{align*}
& x\leq y\text{ if and only if }x\rightarrow y=1, \\
& x\vee y\approx(x\rightarrow y)\rightarrow y\text{ and }x\wedge y\approx(x'\vee y')'.
\end{align*}
for all $x,y\in E$.
\end{theorem}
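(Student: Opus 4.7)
The plan is to verify each of the twelve axioms of Definition~\ref{def1} for the candidate structure $(E,\rightarrow,0)$ in turn, appealing to the identities already collected in Theorems~\ref{th3} and~\ref{th4}. A preliminary sanity check is that the derived symbol $x'=x\rightarrow0$ used in the definition agrees with the effect-algebra complement, because $x\rightarrow0=0+(x\vee0)'=x'$, so both symbols name the same operation and $1=0'$ in both senses too.

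Almost all axioms will fall out immediately. By Theorem~\ref{th3}(i), $x\rightarrow y=1$ is equivalent to $x\leq y$, so axioms (i), (ii), (iii) encode reflexivity, antisymmetry and transitivity of the induced order. Axiom (iv) is Lemma~\ref{lem1}(i); axiom (v) is Lemma~\ref{lem1}(ii); axioms (vi), (vii) and (xi) are items (vi), (vii) and (viii) of Theorem~\ref{th4}; axiom (ix) is Theorem~\ref{th3}(iv); and axiom (xii) is Theorem~\ref{th4}(iii). For axiom (viii) I would first use Theorem~\ref{th4}(iv) to rewrite $(x\rightarrow y)\rightarrow y$ as $x\vee y$, so that the conclusion $((x\rightarrow y)\rightarrow y)\rightarrow z=1$ becomes just the universal property $x\vee y\leq z$ that follows from $x\leq z$ and $y\leq z$.

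The only genuinely nontrivial point will be axiom (x), which packages the partial associativity (E2) in implication terms. My plan is to translate every definedness statement into an inequality using Lemma~\ref{lem1}(iii). Under the hypothesis $x\rightarrow y'=1$, that is $x\leq y'$, Lemma~\ref{lem1}(i),(ii) gives $y\leq x'$, hence $x'\vee y=x'$ and therefore $x'\rightarrow y=y+x''=x+y$; the second premise $(x'\rightarrow y)\rightarrow z'=1$ then reads ``$(x+y)+z$ is defined''. An entirely parallel calculation under $y\rightarrow z'=1$ yields $(y'\rightarrow z)'=(y+z)'$, so $x\rightarrow(y'\rightarrow z)'=1$ reads ``$x+(y+z)$ is defined''. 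Axiom (E2) of the effect algebra then supplies the biconditional, and the same simplifications collapse both $(x'\rightarrow y)'\rightarrow z$ and $x'\rightarrow(y'\rightarrow z)$ to the common value $(x+y)+z=x+(y+z)$.

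What remains is to verify the three displayed claims following the axiom list: $x\leq y$ if and only if $x\rightarrow y=1$ is Theorem~\ref{th3}(i); $x\vee y\approx(x\rightarrow y)\rightarrow y$ is Theorem~\ref{th4}(iv); and $x\wedge y\approx(x'\vee y')'$ is the De~Morgan identity in a bounded lattice equipped with an order-reversing involution, which holds by parts (i) and (ii) of Lemma~\ref{lem1}. I expect the real work of the proof to be the bookkeeping for axiom (x); all remaining items are direct citations.
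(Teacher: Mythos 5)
Your proposal is correct and follows essentially the same route as the paper: each axiom of Definition~\ref{def1} is verified by citing Theorems~\ref{th3} and \ref{th4} (together with Lemma~\ref{lem1}), with axiom (x) reduced to the partial associativity (E2) by translating the definedness conditions into inequalities, exactly as the paper's terse ``(x) from (E2)'' intends. Your explicit check that $x\rightarrow0$ coincides with the effect-algebra complement and your unpacking of axiom (x) are just more detailed versions of what the paper asserts.
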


\begin{proof}
The proof follows from Theorems~\ref{th3} and \ref{th4}. More precisely, (i) -- (iii) follow from the fact that $(E,\leq,0,1)$ is a bounded poset, (iv) follows from the fact that $'$ is antitone on $(E,\leq)$, (v) from the fact that $'$ is an involution, (vi) and (vii) follow from the fact that $x\vee y$ is an upper bound of $x$ and $y$, (viii) follows from the fact that $x\vee y$ is less than or equal to every upper bound of $x$ and $y$, (ix) from (E1), (x) from (E2), (xi) from (viii) of Theorem~\ref{th4} and (xii) from (iii) of Theorem~\ref{th4}.
\end{proof}

\begin{theorem}\label{th2}
Let $\mathbf I=(I,\rightarrow,0)$ be a lattice effect implication algebra and put
\begin{align*}
 x' & :=x\rightarrow0, \\
  1 & :=0', \\
x+y & :=x'\rightarrow y\text{ if and only if }x\rightarrow y'=1
\end{align*}
for all $x,y\in E$. Then $\mathbb{EL}(\mathbf I):=(I,+,{}',0,1)$ is a lattice effect algebra with induced order $\leq$ and lattice operations $\vee$ and $\wedge$ where
\begin{align*}
& x\leq y\text{ if and only if }x\rightarrow y=1, \\
& x\vee y\approx(x\rightarrow y)\rightarrow y\text{ and }x\wedge y\approx(x'\vee y')'.
\end{align*}
\end{theorem}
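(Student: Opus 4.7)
The plan is, in three stages: (A) show that $x \leq y$ iff $x \rightarrow y = 1$ makes $I$ into a bounded lattice with $'$ an antitone involution and with the formulas given in the statement for $\vee$ and $\wedge$; (B) verify that the partial operation $+$ satisfies (E1)--(E4); and (C) check that the induced order of the resulting effect algebra coincides with $\leq$. Throughout, the axiom labels refer to Definition \ref{def1}.

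For (A), reflexivity, antisymmetry, transitivity and the bounds $0 \leq x \leq 1$ come from axioms (i)--(iii), while (iv) and (v) make $'$ antitone and involutive. Axioms (vi) and (vii) say that $(x \rightarrow y) \rightarrow y$ is an upper bound of $\{x, y\}$, and (viii) says it is the least such, giving $x \vee y = (x \rightarrow y) \rightarrow y$. The meet formula $x \wedge y = (x' \vee y')'$ follows by de Morgan duality.

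For (B), each effect-algebra axiom translates cleanly. (E1): from $x \rightarrow y' = 1$, (iv) and (v) give $y \rightarrow x' = 1$, and (ix) applied to the premise gives $x' \rightarrow y = y' \rightarrow x$, i.e.\ $x + y = y + x$. (E2) is literally axiom (x): translating the definedness of $(x+y)+z$ and of $x+(y+z)$ through the defining formula $a + b = a' \rightarrow b$ produces the two sides of the biconditional in (x), and the equality $(x' \rightarrow y)' \rightarrow z = x' \rightarrow (y' \rightarrow z)$ is exactly $(x+y)+z = x+(y+z)$. (E3): $x + y = 1$ means $x \rightarrow y' = 1$ and $x' \rightarrow y = 1$, i.e.\ $x \leq y'$ and $y' \leq x$, whence $x = y'$ by antisymmetry and so $y = x'$; conversely, $x \rightarrow x = 1$ makes $x + x'$ defined and equal to $x' \rightarrow x' = 1$. (E4) is analogous to the forward direction of (E3).

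The main obstacle is (C), the coincidence of the induced order with $\leq$. The easy direction is: if $x + z = y$, then $x \rightarrow z' = 1$ and $y = x' \rightarrow z$; applying (ix) to the premise gives $x' \rightarrow z = z' \rightarrow x$, so $y = z' \rightarrow x$, and then (xii) yields $x \rightarrow y = x \rightarrow (z' \rightarrow x) = 1$. The harder direction asks, for each $x \leq y$, for a witness $z$ with $x + z = y$; guided by the lattice case I set $z := (y \rightarrow x)'$. Axiom (xii) gives $x \leq y \rightarrow x$, i.e.\ $x \rightarrow z' = 1$, so $x + z$ is defined; antitonicity of $'$ then gives $(y \rightarrow x)' \leq x'$, and (ix) applied to this inequality rewrites $x + z = x' \rightarrow (y \rightarrow x)' = (y \rightarrow x) \rightarrow x$. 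By the join formula from (A) this last expression equals $y \vee x$, which equals $y$ because $x \leq y$.
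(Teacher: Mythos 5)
Your proof is correct and follows essentially the same route as the paper: the same definition of $\leq$, the same join and meet formulas from axioms (vi)--(viii), the same verification of (E1)--(E4) via (ix) and (x), and the same witness $z=(y\rightarrow x)'$ for showing the induced order coincides with $\leq$. The only deviation is that in part (C) you rewrite $x'\rightarrow(y\rightarrow x)'$ (and, in the converse direction, $x'\rightarrow z$) using axiom (ix) together with antitonicity and (v), where the paper invokes (xi); both rewritings are valid, so nothing is lost.
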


\begin{proof}
Let $a,b,c\in I$. Define
\[
x\leq y\text{ if and only if }x\rightarrow y=1
\]
for all $x,y\in I$. Then, due to (i) -- (v) of Definition~\ref{def1}, $(I,\leq,{}',0,1)$ is a bounded poset with an antitone involution. If $a+b$ is defined then $a\leq b'$ and hence $b\leq a'$, i.e.\ $b+a$ is defined and
\[
a+b=a'\rightarrow b=b'\rightarrow a=b+a
\]
according to (ix) of Definition~\ref{def1}. This shows (E1). Now $a+b$ is defined if and only if $a\leq b'$, and in this case $a+b=a'\rightarrow b$. If $a+b$ is defined then $(a+b)+c$ is defined if and only if $a+b\leq c'$, and in this case
\[
(a+b)+c=(a+b)'\rightarrow c=(a'\rightarrow b)'\rightarrow c.
\]
Hence $(a+b)+c$ is defined if and only if both $a\leq b'$ and $a'\rightarrow b\leq c'$. On the other hand, $b+c$ is defined if and only if $b\leq c'$, and in this case $b+c=b'\rightarrow c$. If $b+c$ is defined then $a+(b+c)$ is defined if and only if $a\leq(b+c)'$, and in this case \[
a+(b+c)=a'\rightarrow(b+c)=a'\rightarrow(b'\rightarrow c).
\]
Hence $a+(b+c)$ is defined if and only if both $b\leq c'$ and $a\leq(b'\rightarrow c)'$. According to (x) of Definition~\ref{def1}, (E2) holds. Now the following are equivalent:
\begin{align*}
& a+b=1, \\
& a\leq b'\text{ and }a'\rightarrow b=1, \\
& b\leq a'\text{ and }a'\leq b, \\
& b=a'.
\end{align*}
This shows (E3). If $1+a$ is defined then $1\leq a'$ whence $a'=1$, i.e.\ $a=0$ showing (E4). Hence $\mathbb{EL}(\mathbf I)$ is an effect algebra. According to (vi) -- (viii) of Definition~\ref{def1} we have
\[
x\vee y\approx(x\rightarrow y)\rightarrow y,
\]
and because $'$ is an antitone involution on $(I,\leq)$ we have
\[
x\wedge y\approx(x'\vee y')'.
\]
If $a\leq b$ then $a\rightarrow(b\rightarrow a)''=a\rightarrow(b\rightarrow a)=1$ according to (xii) and
\begin{align*}
a+(b\rightarrow a)' & =a'\rightarrow(b\rightarrow a)'=(b\rightarrow a)''\rightarrow(a'\vee(b\rightarrow a)')'=(b\rightarrow a)\rightarrow a''= \\
& =(b\rightarrow a)\rightarrow a=b\vee a=b
\end{align*}
according to (xi). If, conversely, $a+c=b$ then $a\rightarrow c'=1$ and $a'\rightarrow c=b$ and hence
\[
a\leq c'\rightarrow a=c'\rightarrow a''=c'\rightarrow(a'\vee c)'=a'\rightarrow c=b
\]
according to (xii) and (xi). This shows that induced order of $\mathbb{EL}(\mathbf I)$ coincides with $\leq$.
\end{proof}

\begin{theorem}
The correspondence between lattice effect algebras and lattice effect implication algebras described by Theorems~\ref{th1} and \ref{th2} is one-to-one.
\end{theorem}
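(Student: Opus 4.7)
The plan is to verify that the two constructions are mutually inverse, namely that $\mathbb{EL}(\mathbb{IL}(\mathbf E)) = \mathbf E$ for every lattice effect algebra $\mathbf E$ and $\mathbb{IL}(\mathbb{EL}(\mathbf I)) = \mathbf I$ for every lattice effect implication algebra $\mathbf I$. Since in each case the signatures coincide on both sides, this reduces to checking that the fundamental operations (together with their domains, in the partial case) and the constants agree.

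Starting with $\mathbf E = (E,+,{}',0,1)$ and forming $\mathbb{IL}(\mathbf E) = (E,\rightarrow,0)$ with $x \rightarrow y := y + (x \vee y)'$, I denote the operations of $\mathbb{EL}(\mathbb{IL}(\mathbf E))$ by $+^{*}$, ${}^{*}$, $1^{*}$. A direct unwinding gives $x^{*} = x \rightarrow 0 = 0 + (x \vee 0)' = x'$ and hence $1^{*} = 0^{*} = 0' = 1$. The sum $x +^{*} y$ is defined iff $x \rightarrow y^{*} = x \rightarrow y' = 1$, which by Theorem~\ref{th3}(i) is equivalent to $x \leq y'$; by Lemma~\ref{lem1}(iii) this is precisely the domain of $+$. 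When defined, $x +^{*} y = x' \rightarrow y = y + (x' \vee y)'$, and since $x \leq y'$ forces $y \leq x'$ and thus $x' \vee y = x'$, we obtain $x +^{*} y = y + x'' = y + x = x + y$ by (E1).

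Starting conversely with $\mathbf I = (I,\rightarrow,0)$, one constructs $\mathbb{EL}(\mathbf I)$ as in Theorem~\ref{th2} and lets $\rightarrow^{*}$ denote the implication of $\mathbb{IL}(\mathbb{EL}(\mathbf I))$, so that $x \rightarrow^{*} y = y + (x \vee y)'$ where $x \vee y = (x \rightarrow y) \rightarrow y$. Because $y \leq x \vee y = ((x \vee y)')'$, the sum $y + (x \vee y)'$ is defined and equals $y' \rightarrow (x \vee y)' = y' \rightarrow ((x \rightarrow y) \rightarrow y)'$. Axiom (xi) of Definition~\ref{def1} then yields $x \rightarrow^{*} y = x \rightarrow y$, and $0$ is preserved trivially.

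The main obstacle is this second direction, which hinges critically on axiom (xi) of Definition~\ref{def1}. That axiom was formulated precisely (cf.\ Theorem~\ref{th4}(viii)) to encode the recoverability of $\rightarrow$ from the effect algebra addition via the formula $y + (x \vee y)'$; without it, no purely equational manipulation of the remaining axioms would collapse $y' \rightarrow ((x \rightarrow y) \rightarrow y)'$ to $x \rightarrow y$. The first direction, by contrast, is essentially a bookkeeping exercise using Lemma~\ref{lem1}, Theorem~\ref{th3}(i), and the commutativity axiom (E1).
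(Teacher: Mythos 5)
Your proof is correct and follows essentially the same route as the paper: you verify $\mathbb{EL}(\mathbb{IL}(\mathbf E))=\mathbf E$ by matching domains via $x\leq y'$ and recomputing $x'\rightarrow y=x+y$ (which is exactly Theorem~\ref{th3}(ii)), and you verify $\mathbb{IL}(\mathbb{EL}(\mathbf I))=\mathbf I$ by the identity $y+(x\vee y)'=y'\rightarrow((x\rightarrow y)\rightarrow y)'$ together with axiom (xi) of Definition~\ref{def1}, just as the paper does. Your explicit check that $y+(x\vee y)'$ is defined is a small added detail the paper leaves implicit, but otherwise the arguments coincide.
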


\begin{proof}
Let $\mathbf E=(E,+,{}',0,1)$ be a lattice effect algebra, put $\mathbb{IL}(\mathbf E)=(E,\rightarrow,0)$ and $\mathbb{EL}(\mathbb{IL}(\mathbf E))=(E,\oplus,{}^*,0,e)$ and let $a,b\in E$. Then $a\oplus b$ is defined if and only if so is $a+b$ since both are equivalent to $a\leq b'$, and in this case
\[
a\oplus b=a'\rightarrow b=a+b
\]
according to Theorem~\ref{th3}. Moreover,
\[
a^*=a\rightarrow0=a'
\]
according to Theorem~\ref{th4}, and
\[
e=0\rightarrow0=0'=1
\]
showing $\mathbb{EL}(\mathbb{IL}(\mathbf E))=\mathbf E$. Conversely, let $\mathbf I=(I,\rightarrow,0)$ be a lattice effect implication algebra, put $\mathbb{EL}(\mathbf I)=(I,+,{}',0,1)$ and $\mathbb{IL}(\mathbb{EL}(\mathbf I))=(I,\Rightarrow,0)$ and let $c,d\in I$. Then
\[
c\Rightarrow d=d+(c\vee d)'=d'\rightarrow((c\rightarrow d)\rightarrow d)'=c\rightarrow d
\]
according to (xi). This shows $\mathbb{IL}(\mathbb{EL}(\mathbf I))=\mathbf I$.
\end{proof}

\section{Axioms and rules for the logic of lattice effect algebras}

By a propositional language we understand some set $\mathcal L$ of propositional connectives. The arity of a propositional connective $c\in\mathcal L$ is called the rank of $c$. The $\mathcal L$-formulas are built in the usual way from the propositional variables using the connectives of $\mathcal L$. We denote the set of all $\mathcal L$-formulas by $\mathcal Fm_{\mathcal L}$ or $\mathcal Fm$ in brief when the language $\mathcal L$ is clear from the context.

By a logic in $\mathcal L$ we mean a standard deductive system over $\mathcal L$ or, equivalently, its consequence relation $\vdash_{\mathcal L}$ (see \cite{BP} for details). We shall use the notation $\vdash$ whenever there is no danger of confusion. 

Let us mention that a system of axioms and rules for the propositional logic induced by lattice effect algebras was already presented in \cite{RSS}. However, that system is rather complicated because it consists of five axioms and ten derivation rules. In what follows we present another system having only three axioms and five rules. Moreover, three of these rules, namely Modus Ponens, Suffixing and Weak Prefixing are common in many propositional calculi.

By the propositional logic $L_{{\rm LEA}}$ in a language $\mathcal L=\{\rightarrow,0\}$ ($\rightarrow$ is of rank 2, $0$ is of rank $0$) we understand a consequence relation $\vdash_{{\rm LEA}}$ (or $\vdash$, in brief) satisfying the axioms
\begin{enumerate}[(A1)]
\item $\vdash\varphi\rightarrow(\psi\rightarrow\varphi)$,
\item $\vdash((\varphi\rightarrow\psi)\rightarrow\psi)\rightarrow((\psi\rightarrow\varphi)\rightarrow\varphi)$,
\item $\vdash0\rightarrow\varphi$
\end{enumerate}
and the rules
\begin{enumerate}
\item[(MP)] $\varphi,\varphi\rightarrow\psi\vdash\psi$,
\item[(Sf)] $\varphi\rightarrow\psi\vdash(\psi\rightarrow\chi)\rightarrow(\varphi\rightarrow\chi)$,
\item[(WPf)] $\varphi\rightarrow\psi,\psi\rightarrow\varphi \vdash(\chi\rightarrow\varphi)\rightarrow(\chi\rightarrow\psi)$,
\item[(R1)] $\varphi\rightarrow\psi\vdash(\neg\varphi\rightarrow\neg\psi)\rightarrow(\psi\rightarrow\varphi)$,
\item[(R2)] $\varphi\rightarrow\neg\psi,(\neg\varphi\rightarrow\psi)\rightarrow\neg\chi\vdash(\neg(\neg\varphi\rightarrow\psi)\rightarrow \chi)\rightarrow (\neg\varphi\rightarrow(\neg\psi\rightarrow\chi))$,
\end{enumerate}
where $\neg\varphi:=\varphi\rightarrow0$ and $1:=\neg0=0\rightarrow0$.

The axiom system (A1) -- (A3) with the derivation rules (MP) -- (R2) will be referred to as {\em axiom system $\mathbf A$}.

As usual, for $\Gamma\cup\{\varphi,\psi\}\subseteq\mathcal Fm$, the biconditional $\Gamma\vdash\varphi\leftrightarrow\psi$ is an abbreviation for $\Gamma\vdash \varphi\rightarrow \psi$ and $\Gamma\vdash\psi\rightarrow \varphi$. 

In order to show that the system $\mathbf A$ is really an axiom system in Gentzen style for lattice effect algebras, we prove the following important properties.

\begin{theorem}\label{th9}
In the propositional logic $L_{{\rm LEA}}$ the following are provable:
\begin{enumerate}[{\rm(a)}]
\item $\varphi\rightarrow\psi,\psi\rightarrow\chi\vdash\varphi\rightarrow\chi$,
\item $\vdash\varphi\rightarrow1$,
\item $\vdash\varphi\rightarrow\varphi$,
\item $\vdash\psi\rightarrow(\psi\vee\varphi)$ where $\psi\vee\varphi:=(\psi\rightarrow\varphi)\rightarrow\varphi$,
\item $\vdash\neg\neg\varphi\leftrightarrow\varphi$,
\item $\varphi\rightarrow\psi\vdash\neg\varphi\rightarrow\neg\psi\leftrightarrow\psi\rightarrow\varphi$,
\item $\vdash\psi\rightarrow(\varphi\vee\psi)$ and $\vdash(\varphi\vee\psi)\leftrightarrow(\psi\vee\varphi)$,
\item $\varphi\rightarrow\psi,\chi\rightarrow\psi\vdash(\varphi\vee\chi)\rightarrow\psi$,
\item $\vdash\varphi\rightarrow\psi\leftrightarrow(\varphi\vee\psi)\rightarrow\psi$,
\item $\vdash\neg\varphi\rightarrow\neg(\psi\vee\varphi)\leftrightarrow\psi\rightarrow\varphi$.
\end{enumerate}
\end{theorem}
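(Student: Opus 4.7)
The plan is to derive the ten items in the order (a), (b), (d), (e), (c), (f), (g), (i), (h), (j), using the axioms (A1)--(A3) together with the rules (MP), (Sf), (WPf), (R1), (R2). Transitivity (item (a)) is the most basic and will be used as a meta-rule throughout. Item (c), surprisingly, is non-trivial and will be postponed because its derivation relies on (e).

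For (a), applying (Sf) to the premise $\varphi\rightarrow\psi$ yields $(\psi\rightarrow\chi)\rightarrow(\varphi\rightarrow\chi)$, and (MP) with $\psi\rightarrow\chi$ gives $\varphi\rightarrow\chi$. For (b), (A3) instantiated at $\varphi:=0$ is exactly $\vdash 1$; then the (A1) instance $\vdash 1\rightarrow(\varphi\rightarrow 1)$ combined with (MP) produces $\vdash\varphi\rightarrow 1$. For (d), the (A1) instance $\vdash\psi\rightarrow((\varphi\rightarrow\psi)\rightarrow\psi)$ and the axiom (A2) combine via (a) to give $\vdash\psi\rightarrow((\psi\rightarrow\varphi)\rightarrow\varphi)$, which is $\vdash\psi\rightarrow(\psi\vee\varphi)$ by definition of $\vee$. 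For (e), the forward direction $\vdash\varphi\rightarrow\neg\neg\varphi$ is (d) with $\varphi:=0$; for the converse I apply (R1) to $\vdash\varphi\rightarrow\neg\neg\varphi$, obtaining $\vdash(\neg\varphi\rightarrow\neg\neg\neg\varphi)\rightarrow(\neg\neg\varphi\rightarrow\varphi)$, and discharge the antecedent with the (d)-instance $\vdash\neg\varphi\rightarrow\neg\neg\neg\varphi$ via (MP). With (e) in hand, (c) follows by applying (WPf) to $\vdash\varphi\rightarrow\neg\neg\varphi$ and $\vdash\neg\neg\varphi\rightarrow\varphi$ with $\chi:=\varphi$, then (MP) against $\vdash\varphi\rightarrow\neg\neg\varphi$.

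For (f), one direction is (R1) verbatim. The other, given $\vdash\varphi\rightarrow\psi$, first extracts $\vdash\neg\psi\rightarrow\neg\varphi$ from (Sf) with $\chi:=0$, then applies (R1) again to produce $\vdash(\neg\neg\psi\rightarrow\neg\neg\varphi)\rightarrow(\neg\varphi\rightarrow\neg\psi)$; the double negations are erased by composing with the (e)-equivalences through (Sf) and (WPf), and (a) chains everything into $\vdash(\psi\rightarrow\varphi)\rightarrow(\neg\varphi\rightarrow\neg\psi)$. For (g), commutativity of $\vee$ is (A2) taken both ways, and $\vdash\psi\rightarrow(\varphi\vee\psi)$ is the direct (A1) instance $\vdash\psi\rightarrow((\varphi\rightarrow\psi)\rightarrow\psi)$. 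For (i), the direction $\vdash((\varphi\vee\psi)\rightarrow\psi)\rightarrow(\varphi\rightarrow\psi)$ follows from $\vdash\varphi\rightarrow(\varphi\vee\psi)$ (from (g)) via (Sf); the converse $\vdash(\varphi\rightarrow\psi)\rightarrow((\varphi\vee\psi)\rightarrow\psi)$ is (d) relabelled with $\psi:=\varphi\rightarrow\psi$, $\varphi:=\psi$. For (h), two applications of (Sf) to $\vdash\varphi\rightarrow\psi$ give $\vdash(\varphi\vee\chi)\rightarrow(\psi\vee\chi)$; (i) applied to the premise $\vdash\chi\rightarrow\psi$ yields $\vdash(\chi\vee\psi)\rightarrow\psi$, and (g)+(a) then deliver $\vdash(\varphi\vee\chi)\rightarrow\psi$. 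For (j), combine (g) (supplying $\vdash\varphi\rightarrow(\psi\vee\varphi)$), (f) (to rewrite $\neg\varphi\rightarrow\neg(\psi\vee\varphi)$ as $(\psi\vee\varphi)\rightarrow\varphi$), and (i) with variables swapped (to collapse $(\psi\vee\varphi)\rightarrow\varphi$ to $\psi\rightarrow\varphi$).

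The principal obstacle I anticipate is (c): the system has no explicit deduction theorem, so the usual Hilbert-style proof of $\varphi\rightarrow\varphi$ is unavailable, and reflexivity must be bootstrapped through the involution via (d), (R1), and (WPf). A secondary worry is (h), where the derivation has to interleave the $\vee$-reordering supplied by (A2) with the collapse $\chi\vee\psi\approx\psi$ that is valid under $\chi\rightarrow\psi$; getting these to mesh cleanly requires care. The heavy rule (R2) does not appear to be needed anywhere in this theorem; I expect it to earn its keep later, when the partial operation $+$ is reconstructed from $\rightarrow$ in analogy with clause (x) of Definition~\ref{def1}.
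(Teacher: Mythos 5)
Your derivations all check out, and the reordering (a), (b), (d), (e), (c), \dots{} creates no circularity, since your (d) uses only (A1), (A2) and (a), and your (e) uses only (d), (R1) and (MP). The route differs from the paper's in a few places, though. For (c) the paper does not go through negation at all: from the (A1)--instances $\vdash\psi\rightarrow((\varphi\rightarrow\psi)\rightarrow\psi)$ and $\vdash\varphi\rightarrow(\psi\rightarrow\varphi)$ it gets, via (Sf), (A2) and (MP), $\vdash\psi\rightarrow(\varphi\rightarrow\varphi)$, and then substitutes the provable formula $1$ for $\psi$; so reflexivity needs neither (R1) nor (WPf), whereas in your scheme it inherits a dependence on both through (e). Similarly, for the converse half of (e) the paper avoids (R1), chaining $\vdash\neg\neg\varphi\rightarrow(0\vee\varphi)$ (an (A2) instance) with $\vdash(0\vee\varphi)\rightarrow\varphi$ (from (A3), (d), (MP)), while you apply (R1) to $\vdash\varphi\rightarrow\neg\neg\varphi$ and discharge $\neg\varphi\rightarrow\neg\neg\neg\varphi$; both are fine, but the paper's version keeps (c)--(e) inside the fragment (A1)--(A3), (Sf), (MP). In (i) your one-step use of (Sf) on $\vdash\varphi\rightarrow(\varphi\vee\psi)$ is shorter than the paper's detour through $\psi\vee(\varphi\rightarrow\psi)$, and in (h) you reuse (i) where the paper reuses (d) with (MP) --- equivalent. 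Finally, for the second half of (f) your explicit argument ((Sf) with $\chi:=0$, then (R1), then erasing the double negations via the (e)-equivalences with (Sf) and (WPf), chained by (a)) actually supplies detail the paper compresses into the bare citation ``according to (Sf)'', which is not a literal instance of that rule; so here your write-up is the more complete one. Your closing observation that (R2) is never used in this theorem agrees with the paper.
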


\begin{proof}
\
\begin{enumerate}[(a)]
\item This follows from (Sf) and (MP).
\item According to (A1),
\[
\vdash1\rightarrow(\varphi\rightarrow1)
\]
and according to (A3)
\[
\vdash0\rightarrow0,
\]
i.e.
\[
\vdash1.
\]
Thus applying (MP) we conclude
\[
\vdash\varphi\rightarrow1.
\]
\item According to (A1),
\begin{align*}
& \vdash\psi\rightarrow((\varphi\rightarrow\psi)\rightarrow\psi), \\
& \vdash\varphi\rightarrow(\psi\rightarrow\varphi),
\end{align*}
thus by (Sf),
\[
\vdash((\psi\rightarrow\varphi)\rightarrow\varphi)\rightarrow(\varphi\rightarrow\varphi).
\]
Applying (A2) and (Sf) we have
\[
\vdash(((\psi\rightarrow\varphi)\rightarrow\varphi)\rightarrow(\varphi\rightarrow\varphi))\rightarrow(((\varphi\rightarrow\psi)\rightarrow\psi)\rightarrow(\varphi\rightarrow\varphi)).
\]
Now (MP) yields
\[
\vdash((\varphi\rightarrow\psi)\rightarrow\psi)\rightarrow(\varphi\rightarrow\varphi).
\]
Hence
\[
\vdash\psi\rightarrow(\varphi\rightarrow\varphi)
\]
by (a). Substituting any provable formula for $\psi$ within the last formula yields
\[
\vdash\varphi\rightarrow\varphi
\]
according to (MP).
\item We have
\[
\vdash\psi\rightarrow(\varphi\vee\psi)
\]
according to (A1) and
\[
\vdash(\varphi\vee\psi)\rightarrow(\psi\vee\varphi)
\]
according to (A2), and hence
\[
\vdash\psi\rightarrow(\psi\vee\varphi)
\]
by (a).
\item We have
\[
\vdash\neg\neg\varphi\rightarrow(0\vee\varphi)
\]
according to (A2) and
\[
\vdash(0\vee\varphi)\rightarrow\varphi
\]
according to (A3), (d) and (MP). This shows
\[
\vdash\neg\neg\varphi\rightarrow\varphi
\]
by (a). Conversely,
\[
\vdash\varphi\rightarrow\neg\neg\varphi
\]
according to (d).
\item According to (R1) we have
\[
\varphi\rightarrow\psi\vdash(\neg\varphi\rightarrow\neg\psi)\rightarrow(\psi\rightarrow\varphi).
\]
Moreover,
\[
\vdash(\psi\rightarrow\varphi)\rightarrow(\neg\varphi\rightarrow\neg\psi)
\]
according to (Sf).
\item We have
\[
\vdash\psi\rightarrow(\varphi\vee\psi)
\]
according to (A1). The rest follows from (A2).
\item Applying (Sf) twice we obtain
\begin{align*}
                                  \varphi\rightarrow\psi & \vdash(\psi\rightarrow\chi)\rightarrow(\varphi\rightarrow\chi), \\
(\psi\rightarrow\chi)\rightarrow(\varphi\rightarrow\chi) & \vdash(\varphi\vee\chi)\rightarrow(\psi\vee\chi)
\end{align*}
and hence
\[
\varphi\rightarrow\psi\vdash(\varphi\vee\chi)\rightarrow(\psi\vee\chi)
\]
by (a). But
\[
\vdash(\psi\vee\chi)\rightarrow(\chi\vee\psi)
\]
according to (A2). Now
\[
\chi\rightarrow\psi\vdash(\chi\vee\psi)\rightarrow\psi
\]
according to (d). Hence (h) follows from (a).
\item We have
\[
\vdash(\varphi\rightarrow\psi)\rightarrow((\varphi\vee\psi)\rightarrow\psi)
\]
according to (d). Conversely,
\[
\vdash((\varphi\vee\psi)\rightarrow\psi)\rightarrow(\psi\vee(\varphi\rightarrow\psi))
\]
according to (A2). Because of (A1) we have
\[
\vdash\psi\rightarrow(\varphi\rightarrow\psi)
\]
and because of (d),
\[
\vdash(\psi\rightarrow(\varphi\rightarrow\psi))\rightarrow((\psi\vee(\varphi\rightarrow\psi))\rightarrow(\varphi\rightarrow\psi)).
\]
Now (MP) implies
\[
\vdash(\psi\vee(\varphi\rightarrow\psi))\rightarrow(\varphi\rightarrow\psi).
\]
By (a) we obtain
\[
\vdash((\varphi\vee\psi)\rightarrow\psi)\rightarrow(\varphi\rightarrow\psi).
\]
\item According to (i) we have
\[
(\psi\vee\varphi)\rightarrow\varphi\leftrightarrow\psi\rightarrow\varphi
\]
and according to (f),
\[
\varphi\rightarrow(\psi\vee\varphi)\vdash\neg\varphi\rightarrow\neg(\psi\vee\varphi)\leftrightarrow(\psi\vee\varphi)\rightarrow\varphi.
\]
But
\[
\vdash\varphi\rightarrow(\psi\vee\varphi)
\]
holds because of (A1). Now (j) follows from (a).
\end{enumerate}
\end{proof}

\section{Algebraic semantics}

For a class $\mathcal K$ of $\mathcal L$-algebras over a language $\mathcal L$, consider the relation $\models_\mathcal K$ that holds between a set $\Sigma$ of identities and a single identity $\varphi\approx\psi$ if every interpretation of $\varphi\approx\psi$ in a member of $\mathcal K$ holds provided each identity in $\Sigma$ holds under the same interpretation. In this case we say that $\varphi\approx\psi$ is a {\em $\mathcal K$-consequence} of $\Sigma$. The relation $\models_\mathcal K$ is called the {\em semantic equational consequence relation} determined by $\mathcal K$.

Given a deductive system $(\mathcal L,\vdash_L)$ over a language $\mathcal L$, a class $\mathcal K$ of $\mathcal L$-algebras is called an {\em algebraic semantics} for $(\mathcal L,\vdash_L)$ if $\vdash_L$ can be interpreted in $\models_\mathcal K$ in the following sense: There exists a finite system $\delta_i(p)\approx\varepsilon_i(p)$, ($\delta(\varphi)\approx\varepsilon(\varphi)$, in brief) of identities with a single variable $p$ such that for all $\Gamma\cup\{\varphi\}\subseteq\mathcal Fm$,
\[
\Gamma\vdash_L\varphi\Leftrightarrow\{\delta(\varphi)\approx\varepsilon(\varphi),\psi\in\Gamma\}\models_\mathcal K\delta(\varphi)\approx\varepsilon(\varphi).
\]
Then $\delta_i\approx\varepsilon_i$ are called {\em defining identities} for $(\mathcal L,\vdash_L)$ and $\mathcal K$.

$\mathcal K$ is said to be {\em equivalent} to $(\mathcal L,\vdash_L)$ if there exists a finite system $\Delta_j(p,q)$ of formulas with two variables $p,q$ such that for every identity $\varphi\approx\psi$,
\[
\varphi\approx\psi\tautequiv_\mathcal K\delta(\varphi\Delta\psi)\approx\varepsilon(\varphi\Delta\psi),
\]
where $\varphi\Delta\psi$ means just $\Delta(\varphi,\psi)$ and $\Gamma\tautequiv_\mathcal K\Delta$ is an abbreviation for the conjunction $\Gamma\models_\mathcal K\Delta$ and $\Delta\models_\mathcal K\Gamma$. 

According to \cite R and \cite{RS}, a {\em standard system of implicative extensional propositional calculus} (SIC, for short) is a deductive system $(\mathcal L,\vdash_L)$ satisfying the following conditions:
\begin{itemize}
\item The language $\mathcal L$ contains a finite number of connectives of rank $0$, $1$ and $  2$ and none of higher rank,
\item $\mathcal L$ contains a binary connective $\rightarrow$ for which the following theorems and derived inference rules hold:
\begin{align*}
& \vdash\varphi\rightarrow\varphi, \\
& \varphi,\varphi\rightarrow\psi\vdash\psi, \\
& \varphi\rightarrow\psi,\psi\rightarrow\chi\vdash\varphi\rightarrow\chi, \\
& \varphi\rightarrow\psi,\psi\rightarrow\varphi\vdash P(\varphi)\rightarrow P(\psi)\text{ for every unary }P\in\mathcal L, \\
& \varphi\rightarrow\psi,\psi\rightarrow\varphi,\chi\rightarrow\lambda,\lambda\rightarrow\chi\vdash Q(\varphi,\chi)\rightarrow Q(\psi,\lambda)\text{ for every binary }Q\in\mathcal L.
\end{align*}
\end{itemize}
As one can immediately see, the system $(\mathcal L,\vdash_{{\rm LEA}})$ fulfils all the properties of SIC. Indeed, there is no unary connective in our language, and we have the only binary connective $\rightarrow$. Now the first property is (c) of Theorem 3.1, the second is (MP) and the third is (a) from Theorem~\ref{th9}. To show the fifth property, we have $\varphi\rightarrow\psi\vdash (\varphi\rightarrow\chi)\rightarrow (\psi\rightarrow \chi)$ by (Sf), and $\chi\rightarrow\lambda,\lambda\rightarrow \chi\vdash (\psi\rightarrow\chi)\rightarrow (\psi\rightarrow \lambda)$ by (WPf). The rest then follows again by (a) of Theorem~\ref{th9}.

Moreover, taking $\varepsilon(p)=p$, $\delta(p)=p\rightarrow p$ and $\Delta(p,q)=\{p\rightarrow q,q\rightarrow p\}$, it is known (see \cite{BP}) that every SIC has an equivalent algebraic semantics with the defining identity $\delta\approx\varepsilon$ and with the set $\Delta$ as an equivalence system. As a consequence we obtain

\begin{proposition}\label{prop1}
The logic $(\mathcal L,\vdash_{{\rm LEA}})$ is algebraizable with equivalence formulas $\Delta=\{p\rightarrow q,q\rightarrow p\}$ and the defining identity $p\approx p\rightarrow p$.
\end{proposition}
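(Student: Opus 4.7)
The plan is to deduce the proposition directly from the classical theorem (see \cite{R}, \cite{RS}, and \cite{BP}) which asserts that every SIC is algebraizable, with the canonical choices $\varepsilon(p)=p$, $\delta(p)=p\rightarrow p$, and equivalence system $\Delta(p,q)=\{p\rightarrow q,q\rightarrow p\}$. Since the SIC verification has already been carried out in the paragraph immediately preceding the statement, what remains is to organize that verification and appeal to the cited result.

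For the SIC verification I would simply record where each of the five conditions is established: reflexivity $\vdash\varphi\rightarrow\varphi$ is Theorem~\ref{th9}(c); detachment $\varphi,\varphi\rightarrow\psi\vdash\psi$ is the primitive rule (MP); transitivity $\varphi\rightarrow\psi,\psi\rightarrow\chi\vdash\varphi\rightarrow\chi$ is Theorem~\ref{th9}(a); the unary congruence clause is vacuous, since the language $\mathcal L=\{\rightarrow,0\}$ contains no primitive unary connective; and the binary congruence clause for $\rightarrow$ is obtained by combining (Sf), which gives $\psi\rightarrow\varphi\vdash(\varphi\rightarrow\chi)\rightarrow(\psi\rightarrow\chi)$, with (WPf), which gives $\chi\rightarrow\lambda,\lambda\rightarrow\chi\vdash(\psi\rightarrow\chi)\rightarrow(\psi\rightarrow\lambda)$, and then chaining via transitivity to obtain $(\varphi\rightarrow\chi)\rightarrow(\psi\rightarrow\lambda)$ from the four premises $\varphi\rightarrow\psi,\psi\rightarrow\varphi,\chi\rightarrow\lambda,\lambda\rightarrow\chi$.

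Having confirmed that $(\mathcal L,\vdash_{{\rm LEA}})$ is an SIC, I would then appeal to the general algebraizability theorem to conclude that the defining identity is $\delta(p)\approx\varepsilon(p)$, namely $p\approx p\rightarrow p$, and that the equivalence formulas are exactly $\{p\rightarrow q,q\rightarrow p\}$, as claimed. The algebraic semantics itself is then obtained, in the standard way, from the Lindenbaum--Tarski construction applied to $\vdash_{{\rm LEA}}$ under the interderivability induced by $\Delta$.

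The main conceptual content lies entirely in the earlier derivations supplying the SIC clauses, and especially in the binary congruence clause, which is the only one requiring a non-trivial combination of the derivation rules. No further obstacle arises here: once the SIC status is in hand, the proposition reduces to a direct invocation of the abstract algebraic logic framework of \cite{BP}.
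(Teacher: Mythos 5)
Your proposal is correct and follows essentially the same route as the paper: verify the SIC conditions (reflexivity via Theorem~\ref{th9}(c), detachment via (MP), transitivity via Theorem~\ref{th9}(a), vacuous unary clause, and the binary congruence clause from (Sf), (WPf) and chaining) and then invoke the Blok--Pigozzi result that every SIC is algebraizable with $\delta(p)=p\rightarrow p$, $\varepsilon(p)=p$ and $\Delta=\{p\rightarrow q,q\rightarrow p\}$. Your treatment of the binary congruence clause is in fact slightly more carefully stated than the paper's (you correctly apply (Sf) to the premise $\psi\rightarrow\varphi$), so nothing further is needed.
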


In order to show that LEA is an equivalent algebraic semantics for $(\mathcal L,\vdash_{{\rm LEA}})$ we use the following statement (see \cite{BP}, Theorem~2.17).

\begin{proposition}\label{prop2}
Let $(\mathcal L,\vdash_L)$ be a deductive system given by a set of axioms {\rm Ax} and a set of inference rules {\rm Ir}. Assume $(\mathcal L,\vdash_L)$ is algebraizable with equivalence formulas $\Delta$ and defining identities $\delta\approx\varepsilon$. Then the unique equivalent semantics for $(\mathcal L,\vdash_L)$ is axiomatized by the identities
\begin{itemize}
\item $\delta(\varphi)\approx\varepsilon(\varphi)$ for each $\varphi\in{\rm Ax}$,
\item $\delta(p\Delta p)\approx\varepsilon(p\Delta p)$
\end{itemize}
together with the quasiidentities
\begin{itemize}
\item $\delta(\psi_0)\approx\varepsilon(\psi_0)\wedge\cdots\wedge\delta(\psi_{n-1})\approx\varepsilon(\psi_{n-1})\Rightarrow\delta(\varphi)\approx\varepsilon(\varphi)$ for each $\psi_0,\ldots,\psi_{n-1}\vdash\varphi\in{\rm Ir}$,
\item $\delta(p\Delta q)\approx\varepsilon(p\Delta q)\Rightarrow p\approx q$.
\end{itemize}
\end{proposition}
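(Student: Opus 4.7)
The plan is to prove Proposition~\ref{prop2} by verifying, in the standard Blok--Pigozzi framework, that the class $\mathcal{K}$ axiomatized by the displayed identities and quasiidentities is equivalent to $(\mathcal{L},\vdash_L)$, and then invoking the uniqueness of equivalent algebraic semantics. Let me write $\tau(\varphi):=\delta(\varphi)\approx\varepsilon(\varphi)$ and, for an identity $\sigma\approx\rho$, $\rho(\sigma\approx\rho):=\sigma\Delta\rho$ understood as the set of formulas $\{\sigma\,\Delta_j\,\rho\}$. Since we are told that $(\mathcal{L},\vdash_L)$ is algebraizable, we have at our disposal the key intrinsic identities satisfied by any equivalent semantics, namely $\sigma\approx\rho\;\tautequiv\;\tau(\sigma\,\Delta\,\rho)$ and $\varphi\;\dashv\vdash\;\rho(\tau(\varphi))$, for formulas $\varphi$ and terms $\sigma,\rho$.

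First I would establish soundness: every algebra in $\mathcal{K}$ validates the translated consequence relation, i.e.\ $\Gamma\vdash_L\varphi$ implies $\{\tau(\psi):\psi\in\Gamma\}\models_{\mathcal{K}}\tau(\varphi)$. The argument is induction on the length of a derivation. For axioms $\varphi\in\mathrm{Ax}$ this is immediate from the first family of identities. For an application of a rule $\psi_0,\dots,\psi_{n-1}\vdash\varphi\in\mathrm{Ir}$, the associated quasiidentity is exactly what is needed. The cases for substitution and the structural rules reduce to the identity $\tau(p\,\Delta\,p)$ together with the quasiidentity $\tau(p\,\Delta\,q)\Rightarrow p\approx q$ (which guarantees, via the equivalence formulas, that $\Delta$ is interpreted as equality on the relevant congruence).

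Next I would establish the converse (completeness), which is the technical heart. I would form the Lindenbaum-style quotient: on the formula algebra $\mathbf{Fm}$, put $\varphi\equiv_\Gamma\psi$ iff $\Gamma\vdash_L\varphi\,\Delta\,\psi$ (for each $\Delta_j$). The axiomatizability of $(\mathcal{L},\vdash_L)$ by $\mathrm{Ax}\cup\mathrm{Ir}$ together with the congruence/replacement properties ensures $\equiv_\Gamma$ is a congruence; the quotient $\mathbf{Fm}/\equiv_\Gamma$ then must be checked to satisfy each of the defining identities (because $\Gamma$ proves the axioms) and each of the quasiidentities (because $\Gamma$ closes under the rules). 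Hence $\mathbf{Fm}/\equiv_\Gamma\in\mathcal{K}$. Applying the interpretation $p\mapsto [p]$ and using $\varphi\;\dashv\vdash\;\rho(\tau(\varphi))$, one extracts from $\{\tau(\psi):\psi\in\Gamma\}\models_{\mathcal{K}}\tau(\varphi)$ that $\Gamma\vdash_L\varphi$. Combining soundness and completeness, $\mathcal{K}$ is an algebraic semantics with defining identity $\tau$; the mutual invertibility between identities and formulas provided by $\Delta$ and $\tau$ upgrades this to equivalence.

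Finally, uniqueness follows from the general fact that two quasivarieties that are equivalent algebraic semantics (with the same $\Delta$ and $\tau$) for the same deductive system satisfy exactly the same identities and quasiidentities, hence coincide. The step I expect to be most delicate is the completeness direction: verifying that the Lindenbaum quotient lies in $\mathcal{K}$ is where the precise interplay between the structural rules of $\vdash_L$, the congruence behaviour forced by $\Delta$, and the translation $\tau$ must be pinned down carefully; every other step is essentially a routine translation between the syntactic and semantic sides via the two defining bijections of algebraizability.
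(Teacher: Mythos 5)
There is no proof in the paper to compare against: Proposition~\ref{prop2} is imported verbatim from Blok--Pigozzi (\cite{BP}, Theorem~2.17) and is used as a black box, so the only question is whether your sketch would stand on its own as a proof of that theorem. In outline it would: you follow the standard route --- soundness of the translated axioms and rules by induction on derivations, completeness via a Lindenbaum--Tarski quotient $\mathbf{Fm}/\!\equiv_\Gamma$ with $\varphi\equiv_\Gamma\psi$ iff $\Gamma\vdash_L\varphi\,\Delta\,\psi$, then uniqueness of the equivalent quasivariety semantics. This is essentially how the result is obtained in the literature, and the steps you flag as delicate are the right ones. But as written it is a plan rather than a proof: the facts you lean on --- that $\equiv_\Gamma$ is a congruence compatible with $\Gamma$-provability, that $\varphi\dashv\vdash\rho(\tau(\varphi))$ (the second algebraizability condition) is available in the form you use it, and that the equivalent semantics is unique --- are themselves nontrivial consequences of algebraizability in the Blok--Pigozzi framework and would have to be either proved or cited precisely (they are Lemmas/Theorems in \cite{BP}, e.g.\ the uniqueness statement is their Theorem~2.15). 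Two smaller points: in the soundness direction the identity $\delta(p\Delta p)\approx\varepsilon(p\Delta p)$ and the quasiidentity $\delta(p\Delta q)\approx\varepsilon(p\Delta q)\Rightarrow p\approx q$ play no role (substitution invariance and the structural properties of $\models_{\mathcal K}$ are automatic for a class defined by universally quantified (quasi)identities); they are needed only later, to show that the axiomatized class is an \emph{equivalent} semantics, via $\sigma\approx\rho\models_{\mathcal K}\delta(\sigma\Delta\rho)\approx\varepsilon(\sigma\Delta\rho)$ (replacement plus $\tau(p\Delta p)$) and its converse (the last quasiidentity). Also, in the completeness step you must check the Lindenbaum quotient against \emph{all substitution instances} of the listed axioms and rules, which is where the closure of $\Gamma$ under $\vdash_L$ is actually used. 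With those details filled in, your argument is a correct, if more self-contained, reconstruction of the cited theorem rather than a deviation from the paper, which simply does not prove it.
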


Taking into account that $\vdash\varphi\rightarrow\varphi$ in $(\mathcal L,\vdash_{{\rm LEA}})$, we have $p\rightarrow p\approx1$, i.e.\ $\varepsilon (p)=1$. Applying the previous proposition, we are going to prove the following.

\begin{theorem}\label{th10}
The equivalent algebraic semantics for $(\mathcal L,\vdash_{{\rm LEA}})$ is axiomatized by the following identities and quasiidentities:
\begin{enumerate}[{\rm(1)}]
\item $x\rightarrow(y\rightarrow x)\approx1$,
\item $((x\rightarrow y)\rightarrow y)\rightarrow((y\rightarrow x)\rightarrow x)\approx1$,
\item $0\rightarrow x\approx1$,
\item $x\rightarrow x\approx1$,
\item $x=x\rightarrow y=1\Rightarrow y=1$,
\item $x\rightarrow y=1\Rightarrow(y\rightarrow z)\rightarrow(x\rightarrow z)=1$,
\item $x\rightarrow y=1\Rightarrow(x'\rightarrow y')\rightarrow(y\rightarrow x)=1$,
\item $x\rightarrow y=y\rightarrow x=1\Rightarrow x=y$,
\item $x\rightarrow y'=(x'\rightarrow y)\rightarrow z'=1\Rightarrow((x'\rightarrow y)'\rightarrow z)\rightarrow (x'\rightarrow (y'\rightarrow z))=1$.
\end{enumerate}
\noindent
This system just corresponds to the quasivariety of lattice effect implication algebras. 
\end{theorem}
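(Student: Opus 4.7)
The proof will apply Proposition~\ref{prop2} to the axiom system $\mathbf A$. By Proposition~\ref{prop1}, $L_{\text{LEA}}$ is algebraizable with equivalence formulas $\Delta(p,q)=\{p\to q,\,q\to p\}$ and defining identity $p\approx p\to p$; since $\vdash\varphi\to\varphi$ holds by Theorem~\ref{th9}(c), this identity reduces inside the semantics to $p\to p\approx 1$, so I may read $\varepsilon(p)=1$. Hence each axiom $\vdash\varphi$ translates to the identity $\varphi\approx 1$, and each rule $\psi_0,\dots,\psi_{n-1}\vdash\varphi$ to the quasi-identity $\psi_0\approx 1\wedge\cdots\wedge\psi_{n-1}\approx 1\Rightarrow\varphi\approx 1$.

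Next I will enumerate the translations prescribed by Proposition~\ref{prop2}. The three axioms (A1), (A2), (A3) of $\mathbf A$ produce the identities (1), (2), (3) respectively. The inference rules (MP), (Sf), (R1), (R2) produce the quasi-identities (5), (6), (7), (9) respectively. The auxiliary identity $\delta(p\Delta p)\approx\varepsilon(p\Delta p)$ amounts to $p\to p\approx 1$, which is (4), while the auxiliary quasi-identity $\delta(p\Delta q)\approx\varepsilon(p\Delta q)\Rightarrow p\approx q$ amounts precisely to (8).

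The one rule of $\mathbf A$ not visibly present in the list is (WPf), whose translation is the quasi-identity $x\to y\approx 1\wedge y\to x\approx 1\Rightarrow (z\to x)\to(z\to y)\approx 1$. I would observe that this quasi-identity is derivable from (8) and (4): the premises force $x=y$ by (8), and then the consequent is an instance of (4). Hence the WPf-translation is redundant, and the collection (1)-(9) is exactly the axiomatization Proposition~\ref{prop2} assigns to $L_{\text{LEA}}$.

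Finally, to identify the resulting quasivariety with the lattice effect implication algebras of Definition~\ref{def1}, I would invoke the uniqueness clause of Proposition~\ref{prop2} together with Theorems~\ref{th1} and \ref{th2}. The latter show that lattice effect implication algebras form an algebraic semantics for $L_{\text{LEA}}$ (through their term-equivalence with lattice effect algebras); since Proposition~\ref{prop2} yields the unique equivalent algebraic semantics, the two classes must coincide. I expect the main subtlety to be the bookkeeping in checking that every axiom (i)-(xii) of Definition~\ref{def1} is indeed derivable from (1)-(9) — but soundness and completeness are already packaged into the SIC verification preceding Proposition~\ref{prop1}, so the correspondence follows without any genuinely new computation.
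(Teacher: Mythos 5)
Your first two paragraphs are fine and match what the paper does implicitly: the translations of (A1)--(A3), (MP), (Sf), (R1), (R2) and of the two auxiliary conditions of Proposition~\ref{prop2} give exactly (1)--(9), and your observation that the translation of (WPf), namely $x\rightarrow y=y\rightarrow x=1\Rightarrow(z\rightarrow x)\rightarrow(z\rightarrow y)=1$, is redundant because (8) forces $x=y$ and (4) then gives the conclusion, is correct (the paper omits this rule from the list without comment, so making the redundancy explicit is a genuine improvement).

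The gap is in your last paragraph. Theorems~\ref{th1} and \ref{th2} establish only a term equivalence between two classes of \emph{algebras} (lattice effect algebras and lattice effect implication algebras); they say nothing about the deductive system $\vdash_{\rm LEA}$. To invoke the uniqueness clause of Proposition~\ref{prop2} you would first have to know that the class of lattice effect implication algebras \emph{is} an equivalent algebraic semantics for $(\mathcal L,\vdash_{\rm LEA})$ with the same $\delta,\varepsilon,\Delta$, i.e.\ you would need soundness and completeness of the system $\mathbf A$ with respect to that class. The SIC verification before Proposition~\ref{prop1} gives only algebraizability, i.e.\ the existence of \emph{some} equivalent quasivariety (the one axiomatized by the translations); it does not identify that quasivariety with Definition~\ref{def1}, and assuming it does is circular, since that identification is precisely what Theorem~\ref{th10} asserts. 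The paper closes exactly this gap by explicit computation: it checks that every lattice effect implication algebra satisfies (1)--(9) (the easy direction) and, conversely, derives each of the conditions (i)--(xii) of Definition~\ref{def1} from (1)--(9). Several of these derivations are not mere bookkeeping --- e.g.\ obtaining $1\rightarrow x\approx x$ and the involution law $x''\approx x$ from (2) and (3), deriving the least-upper-bound condition (viii), condition (ix), the associativity-type condition (x), and the identity (xi) $y'\rightarrow((x\rightarrow y)\rightarrow y)'\approx x\rightarrow y$ all require multi-step arguments using the order induced by $x\rightarrow y=1$. Without carrying out this verification (or an equivalent completeness argument), your proof of the final claim ``this system just corresponds to the quasivariety of lattice effect implication algebras'' does not go through.
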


\begin{proof}
It can be immediately seen that any lattice effect implication algebra fulfills the above axioms. 
We will prove the converse, i.e. that the properties listed above yield the conditions of Definition~\ref{def1}.
\begin{enumerate}[(i)]
\item[(i)] The first two identities of (i) are just (3) and (4) and the remaining one follows from (4) and (1).
\item[(ii)] This is just (8).
\item[(iii)] Assume $x\rightarrow y=y\rightarrow z=1$. Then according to (6) we have $(y\rightarrow z)\rightarrow(x\rightarrow z)=1$ which according to (5) yields $x\rightarrow z=1$.

(i) -- (iii) show that the relation $\leq$ defined by $x\leq y$ if and only if $x\rightarrow y=1$ is an partial order relation with smallest element $0$ and greatest element $1$.
\item[(iv)] This follows from (6).
\item[(vi)] According to (1) and (2) we have $x\leq(y\rightarrow x)\rightarrow x\leq(x\rightarrow y)\rightarrow y$.
\item[(vii)] According to (1) we have $y\leq(x\rightarrow y)\rightarrow y$.
\item[(v)] We have
\[
(1\rightarrow x)\rightarrow x\approx((0\rightarrow x)\rightarrow x)\rightarrow x\approx(x\rightarrow(0\rightarrow x))\rightarrow(0\rightarrow x)\approx1\rightarrow1\approx1
\]
according to (3), (2), (1) and (4) and therefore $x\leq(0\rightarrow x)\rightarrow x=1\rightarrow x\leq x$ according to (1) and (3), i.e.\ $1\rightarrow x\approx x$. Now
\[
x''\approx(x\rightarrow0)\rightarrow0\approx(0\rightarrow x)\rightarrow x\approx1\rightarrow x\approx x
\]
according to (2) and (3).
\item[(viii)]
Assume $x\leq z$ and $y\leq z$. Then according to (6) and (2) we have $z\rightarrow y\leq x\rightarrow y$ and
\[
(x\rightarrow y)\rightarrow y\leq(z\rightarrow y)\rightarrow y=(y\rightarrow z)\rightarrow z=1\rightarrow z=z.
\]
\item[(ix)] Assume $x\leq y$. Then $y'\leq x'$ according to (6) and
\[
y\rightarrow x=y''\rightarrow x''\leq x'\rightarrow y'\leq y\rightarrow x
\]
according to (v) and (7).
\item[(x)] Assume  $x\rightarrow y'=(x'\rightarrow y)\rightarrow z'=1$, i.e. $x\leq y'$ and $x'\rightarrow y\leq z'$. We know by (1) that $y\leq x'\rightarrow y$, thus $y\leq z'$. Now, we can apply (7) and (v) to obtain $y'\rightarrow z=z'\rightarrow y$. Using (6), $x'\rightarrow y\leq z'$ yields $z'\rightarrow y\leq (x'\rightarrow y)\rightarrow y=x'\vee y=x'$. This shows that $y'\rightarrow z\leq x'$ which due to (iv) gives $x\leq (y'\rightarrow z)'$. The converse implication follows by interchanging the elements $x,y,z$. Finally, under the above assumptions we have by (9) $(x'\rightarrow y)'\rightarrow z\leq x'\rightarrow (y'\rightarrow z)$. Again, by interchanging the elements $x,y,z$ we obtain the converse inequality and thus equality.
\item[(xi)] Applying (vii), (ix), (2) and (1) we obtain
\begin{align*}
y'\rightarrow((x\rightarrow y)\rightarrow y)' & \approx((x\rightarrow y)\rightarrow y)\rightarrow y\approx(y\rightarrow(x\rightarrow y))\rightarrow(x\rightarrow y)\approx \\
                                              & \approx1\rightarrow(x\rightarrow y)\approx x\rightarrow y.
\end{align*}
\item[(xii)]  This is just (1).
\end{enumerate}
\end{proof}

We conclude that, using the equivalence between lattice effect algebras and lattice effect implication algebras and Theorem~\ref{th10}, system $\mathbf A$ is an algebraic axiomatization of the logic of lattice effect algebras.

\section{Effect implication algebras}

Although lattice effect algebras are more feasible for some kinds of algebraic investigation, effect algebras which need not be lattice-ordered play a more important role in algebraic axiomatization of the logic of quantum mechanics. Hence, it is a question if also in this case we are able to derive some semantics, axioms and rules for a Gentzen type axiomatization in a way similar to that for lattice-ordered effect algebras. 

The first question is how to define the connective implication in not lattice-ordered effect algebras. In this section we show that this can be done successfully in the case when the effect algebra $(E,+,{}',0)$ in question is finite, or, more generally, if for every $x,y\in E$ there exist maximal elements in the lower cone $L(x',y')$.

In the following let $\mathbf E=(E,+,{}',0,1)$ be an effect algebra with induced order $\leq$ such that $(E,\leq)$ satisfies the Ascending Chain Condition (shortly, ACC). This condition says that in  $(E,\leq)$ there do not exist infinite ascending chains. Hence, if $(E,\leq)$ satisfies the ACC then every non-empty subset of $E$ has at least one maximal element. In particular, this is true in case of finite $E$.

Now let $a,b\in E$ and $A,B\subseteq E$. Here and in the following
\begin{align*}
& \Max A\text{ denotes the set of all maximal elements of }(A,\leq), \\
& A+B:=\{x+y\mid x\in A,y\in B\}, \\
& A\leq B\text{ means }x\leq y\text{ for all }x\in A\text{ and }y\in B, \\
& \text{we often identify }\{a\}\text{ with }a.
\end{align*}
Moreover, we define
\begin{align*}
a\rightarrow b & :=b+\Max L(a',b'), \\
a\rightarrow A & :=\bigcup_{x\in A}(a\rightarrow x).
\end{align*}

Since $(E,\leq)$ satisfies the ACC we have $x\rightarrow y\neq\emptyset$ for all $x,y\in E$. Moreover, if $\mathbf E$ is a lattice effect algebra then the definition of $\rightarrow$ coincides with the original natural implication $\rightarrow$ since then
\[
x\rightarrow y=y+\Max L(x',y')=y+\Max L(x'\wedge y')=y+(x'\wedge y')=y+(x\vee y)'
\]
for all $x,y\in E$.

Since every element of $\Max L(x',y')$ is less than or equal to $y'$, $y+\Max L(x',y')$ is defined for each $x,y\in E$. It is worth noticing that now $x\rightarrow y$ need not be a single element of $E$, but a non-void subset of $E$. Hence, one cannot expect that such an implication will satisfy the same properties as the implication in lattice effect algebras. On the other hand, it was already successfully used by the first and third author in \cite{CLb} in order to show that for the implication defined in this way the effect algebra in question can be organized in an operator residuated structure.

Although not all the conditions of Theorems~\ref{th3} and \ref{th4} are valid for effect algebra which need not be lattice-ordered, we still can prove several characteristic properties.

\begin{theorem}\label{th5}
Let $(E,+,{}',0,1)$ be an effect algebra satisfying the {\rm ACC} and $a,b\in E$. Then
\begin{enumerate}[{\rm(i)}]
\item $a\leq b$ if and only if $a\rightarrow b=1$,
\item if $a\leq b'$ then $a+b=a'\rightarrow b$,
\item if $a\geq b$ then $a\rightarrow b=a'+b$,
\item if $a\geq b$ then $a\rightarrow b=b'\rightarrow a'$,
\item $a\rightarrow0=a'$,
\item $1\rightarrow a=a$,
\item $b'\rightarrow\Max L(a',b')=a\rightarrow b$.
\end{enumerate}
\end{theorem}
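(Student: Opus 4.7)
The plan is to verify each of the seven claims by explicitly computing $\Max L(a',b')$ under the hypothesis, since under each hypothesis this set collapses to a single, easily identified element. The only identity I would use repeatedly is Lemma~\ref{lem1}(i), which tells us that $x\leq y$ is equivalent to $y'\leq x'$, and which determines when one of $a',b'$ already lies below the other.

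For (i), I would argue both directions. If $a\leq b$ then $b'\leq a'$, so $b'\in L(a',b')$ and it is clearly the largest element there, giving $\Max L(a',b')=\{b'\}$ and hence $a\rightarrow b=b+b'=1$ by (E3). Conversely, if $a\rightarrow b=\{1\}$, then for every $m\in\Max L(a',b')$ we have $b+m=1$, so (E3) forces $m=b'$; hence $\Max L(a',b')=\{b'\}$, in particular $b'\leq a'$, i.e.\ $a\leq b$. For (ii), apply the definition to $a'\rightarrow b$: here $\Max L(a'',b')=\Max L(a,b')$, and the assumption $a\leq b'$ makes $a$ the maximum of this set, so $a'\rightarrow b=b+a=a+b$. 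For (iii), symmetrically, $a\geq b$ gives $a'\leq b'$, so $a'$ is the largest element of $L(a',b')$ and $a\rightarrow b=b+a'=a'+b$.

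For (iv), with $a\geq b$, I would apply (iii) twice: directly to get $a\rightarrow b=a'+b$, and to the pair $(b',a')$ (noting $b'\geq a'$) to get $b'\rightarrow a'=b''+a'=b+a'=a'+b$, and these coincide. Parts (v) and (vi) are immediate specializations: $\Max L(a',1)=\{a'\}$ since $a'$ is its own upper bound inside $L(a',1)$, and $L(0,a')=\{0\}$ by the definition of bounded poset; plugging into the formula using Lemma~\ref{lem1}(vii) finishes both. Finally, for (vii), I would expand the right-hand notation: $b'\rightarrow\Max L(a',b')$ is by definition $\bigcup_{m\in\Max L(a',b')}(b'\rightarrow m)$, and for each such $m$ we have $b'\geq m$, so (iii) gives $b'\rightarrow m=b+m$; assembling the union recovers $b+\Max L(a',b')=a\rightarrow b$.

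No step looks genuinely hard. The one place to be careful is the backward direction of (i), where one must remember that $a\rightarrow b=1$ really means the \emph{set} $a\rightarrow b$ equals the singleton $\{1\}$, and then use cancellativity of $+$ via (E3) to force $\Max L(a',b')=\{b'\}$ rather than merely to contain $b'$. Everything else is a direct rewriting once the correct maximal element has been identified.
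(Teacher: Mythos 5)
Your proposal is correct and follows essentially the same route as the paper: under each hypothesis you identify the greatest element of the relevant lower cone, so that $\Max L(\cdot,\cdot)$ collapses to a singleton, and then simplify using (E1), (E3) and Lemma~\ref{lem1}; the only cosmetic difference is that you compute (iii) directly, whereas the paper deduces it from (ii) via $a\rightarrow b=a''\rightarrow b$. Your extra care in the backward direction of (i) (that $b+m=1$ forces $m=b'$ by (E3)) is exactly what the paper's chain of equivalences implicitly uses.
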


\begin{proof}
\
\begin{enumerate}[(i)]
\item The following are equivalent:
\begin{align*}
              a & \leq b, \\
             b' & \leq a', \\
  \Max L(a',b') & =b', \\
b+\Max L(a',b') & =1, \\
 a\rightarrow b & =1.
\end{align*}
\item If $a\leq b'$ then $a'\rightarrow b=b+\Max L(a,b')=b+\Max L(a)=b+a=a+b$.
\item If $a\geq b$ then $a\rightarrow b=a''\rightarrow b=a'+b$.
\item If $a\geq b$ then $a\rightarrow b=a'+b=b+a'=b''+a'=b'\rightarrow a'$.
\item We have $a\rightarrow0=0+\Max L(a',0')=0+\Max L(a')=0+a'=a'$.
\item We have $1\rightarrow a=a+\Max L(1',a')=a+\Max L(0)=a+0=a$.
\item We have
\begin{align*}
b'\rightarrow\Max L(a',b') & =\{b'\rightarrow x\mid x\in\Max L(a',b')\}=\{b''+x\mid x\in\Max L(a',b')\}= \\
& =\{b+x\mid x\in\Max L(a',b')\}=b+\Max L(a',b')=a\rightarrow b.
\end{align*}
\end{enumerate}
\end{proof}

Summarizing all what was stated for implication in effect algebras, we can introduce the following concept.

\begin{definition}\label{def2}
An {\em effect implication algebra} is an algebra $(I,\rightarrow,0)$ of type $(2,0)$ satisfying the following conditions for all $x,y,z\in I$ {\rm(}we abbreviate $x\rightarrow0$ by $x'$ and $0'$ by $1${\rm)}:
\begin{enumerate}[{\rm(i)}]
\item $0\rightarrow x\approx x\rightarrow x\approx x\rightarrow1\approx1$,
\item if $x\rightarrow y=y\rightarrow x=1$ then $x=y$,
\item if $x\rightarrow y=y\rightarrow z=1$ then $x\rightarrow z=1$,
\item if $x\rightarrow y=1$ then $y'\rightarrow x'=1$,
\item $x''\approx x$,
\item if $x\rightarrow y=1$ then $y\rightarrow x=x'\rightarrow y'$,
\item $x\rightarrow y'=(x'\rightarrow y)\rightarrow z'=1$ if and only if $y\rightarrow z'=x\rightarrow(y'\rightarrow z)'=1$, and in this case $(x'\rightarrow y)'\rightarrow z=x'\rightarrow(y'\rightarrow z)$,
\item $x\rightarrow(y\rightarrow x)\approx1$
\end{enumerate}
and satisfying the condition that there does not exist an infinite sequence $a_1,a_2,a_3,\ldots$ of pairwise distinct elements of $I$ satisfying $a_n\rightarrow a_{n+1}=1$ for all positive integers $n$.
\end{definition}

Of course, every finite effect implication algebra satisfies trivially the last condition of Definition~\ref{def2}.

We are going to show that every effect algebra satisfying the ACC induces an effect implication algebra and vice versa.

\begin{theorem}\label{th6}
Let $\mathbf E=(E,+,{}',0,1)$ be an effect algebra satisfying the {\rm ACC} with induced order $\leq$ and put
\[
x\rightarrow y:=y+\Max L(x',y')
\]
for all $x,y\in E$. Then $\mathbb I(\mathbf E):=(E,\rightarrow,0)$ is an effect implication algebra. Moreover,
\[
x\leq y\text{ if and only if }x\rightarrow y=1.
\]
\end{theorem}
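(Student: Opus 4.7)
The ``moreover'' clause is precisely part~(i) of Theorem~\ref{th5}, so no separate argument is needed there. For the main assertion, the plan is to verify each of the eight axioms of Definition~\ref{def2} in turn, together with the no-infinite-ascending-chain clause, by leveraging the order-theoretic dictionary $x\leq y\iff x\rightarrow y=1$ from Theorem~\ref{th5}(i) and feeding the remaining parts of Theorem~\ref{th5} and Lemma~\ref{lem1} into each individual case. Axiom~(i) is a direct computation: since $\Max L(x')=\{x'\}$ and $\Max L(0)=\{0\}$, we get $0\rightarrow x=x+x'=1$, $x\rightarrow x=x+x'=1$ and $x\rightarrow 1=1+0=1$. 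Axioms~(ii), (iii) and (iv) then translate, via Theorem~\ref{th5}(i), into antisymmetry and transitivity of the induced order and antitonicity of $'$ (Lemma~\ref{lem1}(i)), all of which are already available. Axiom~(v) uses Theorem~\ref{th5}(v) to identify $x'$ with the single element $x\rightarrow 0$ and then applies Lemma~\ref{lem1}(ii).

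For axiom~(vi) the hypothesis $x\leq y$ lets me invoke Theorem~\ref{th5}(iv) with $a:=y$, $b:=x$, yielding $y\rightarrow x=x'\rightarrow y'$ directly. For axiom~(viii), any element of $y\rightarrow x$ has the shape $x+t$ with $t\in\Max L(y',x')$; parts~(vii) and~(iv) of Lemma~\ref{lem1} give $x=x+0\leq x+t$, so $x\rightarrow u=1$ for every such $u$. The final ascending-chain clause is then immediate from Theorem~\ref{th5}(i): a sequence $a_1,a_2,\ldots$ of pairwise distinct elements with $a_n\rightarrow a_{n+1}=1$ would be a strictly ascending chain in $(E,\leq)$, contradicting the~ACC.

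The main obstacle is expected to be axiom~(vii), which should turn out to be a repackaged form of associativity of $+$. Under the hypothesis $x\rightarrow y'=1$, i.e.\ $x\leq y'$, Theorem~\ref{th5}(ii) collapses $x'\rightarrow y$ to the single element $x+y$, so the second conjunct $(x'\rightarrow y)\rightarrow z'=1$ becomes $x+y\leq z'$. Symmetrically, under $y\rightarrow z'=1$ one has $y'\rightarrow z=y+z$, and $x\rightarrow(y'\rightarrow z)'=1$ becomes $x\leq(y+z)'$. By Lemma~\ref{lem1}(iii) these two pairs of conditions assert, respectively, that $(x+y)+z$ and $x+(y+z)$ are defined, so their equivalence is exactly~(E2). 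The remaining equation $(x'\rightarrow y)'\rightarrow z=x'\rightarrow(y'\rightarrow z)$ collapses, under these same assumptions and via Theorem~\ref{th5}(iii), to $(x+y)+z=x+(y+z)$, which is again~(E2). With this the verification of Definition~\ref{def2} is complete.
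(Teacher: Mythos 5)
Your proposal is correct and follows essentially the same route as the paper: the paper's proof is a terse reduction of the axioms of Definition~\ref{def2} to Theorem~\ref{th5} together with the bounded-poset/antitone-involution facts and (E1)--(E2), and your verification (including the reduction of axiom~(vii) to (E2) via Theorem~\ref{th5}(ii)--(iii) and the ACC argument) simply fills in the details of that same reduction.
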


\begin{proof}
The proof follows from Theorem~\ref{th5}. More precisely, (i) -- (iii) follow from the fact that $(E,\leq,0,1)$ is a bounded poset, (iv) follows from the fact that $'$ is antitone on $(E,\leq)$, (v) follows from the fact that $'$ is an involution, (vi) follows from (E1), (vii) from (E2), (vii) from (vii) of Theorem~\ref{th5} and (viii) from the definition of $\rightarrow$.
\end{proof}

Although $x\rightarrow y$ need not be a single element of the corresponding effect algebra $\mathbf E$, we are able to prove that an effect implication algebra can be converted into an effect algebra where the partial operation $+$ is defined in the standard way.

\begin{theorem}\label{th7}
Let $\mathbf I=(I,\rightarrow,0)$ be an effect implication algebra and put
\begin{align*}
 x' & :=x\rightarrow0, \\
  1 & :=0', \\
x+y & :=x'\rightarrow y\text{ if and only if }x\rightarrow y'=1
\end{align*}
for all $x,y\in E$. Then $\mathbb E(\mathbf I):=(I,+,{}',0,1)$ is an effect algebra satisfying the {\rm ACC} with induced order $\leq$ where
\[
x\leq y\text{ if and only if }x\rightarrow y=1.
\]
\end{theorem}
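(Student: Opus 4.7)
The plan is to adapt the scheme of Theorem~\ref{th2}, replacing the lattice-flavoured axioms of Definition~\ref{def1} by the weaker axioms of Definition~\ref{def2}. I begin by setting $x \leq y$ if and only if $x \rightarrow y = 1$ and use (i)--(v) of Definition~\ref{def2} to conclude that $(I, \leq, {}', 0, 1)$ is a bounded poset with an antitone involution.

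I then verify the effect-algebra axioms (E1)--(E4). For (E1), if $a+b$ is defined then $a \leq b'$, hence $b \leq a'$ by antitonicity, so $b+a$ is defined; axiom (vi) of Definition~\ref{def2} applied to the ordering $a \leq b'$ yields $a+b = a' \rightarrow b = b' \rightarrow a = b+a$. For (E2), expanding $(a+b)+c$ and $a+(b+c)$ exactly as in the proof of Theorem~\ref{th2}, both the equivalence of the definedness conditions and the equality of the two resulting expressions are precisely what axiom (vii) asserts. For (E3), $a+b=1$ unfolds to ``$a \leq b'$ and $a' \rightarrow b = 1$'', i.e.\ $b \leq a'$ and $a' \leq b$, which by antisymmetry gives $b = a'$; the converse is a direct computation. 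For (E4), if $1+a$ is defined then $1 \leq a'$, forcing $a' = 1$ and hence $a = 0$.

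The crux is to show that the induced order of $\mathbb{E}(\mathbf{I})$ coincides with $\leq$. One direction is straightforward: if $a+c = b$, then $a \rightarrow c' = 1$ and $b = a' \rightarrow c$; axiom (vi) rewrites this as $b = c' \rightarrow a$, and then axiom (viii) gives $a \leq c' \rightarrow a = b$. For the converse, given $a \leq b$ the natural witness is $c := (b \rightarrow a)'$: by (viii) we have $a \leq b \rightarrow a$, so $a \rightarrow c' = 1$ and $a+c$ is defined, and by (vi) we compute $a+c = a' \rightarrow (b \rightarrow a)' = (b \rightarrow a) \rightarrow a$. What remains is the identity $(b \rightarrow a) \rightarrow a = b$ under the assumption $a \leq b$. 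In the lattice case this reduced to $a \vee b = b$ via axiom (xi) of Definition~\ref{def1}, but Definition~\ref{def2} offers no direct counterpart; this is the main obstacle. I expect it to be handled either by a careful combination of axioms (vi)--(viii) and antisymmetry, or, more robustly, via the ACC condition built into Definition~\ref{def2}: among the $c$ for which $a+c$ is defined and $a+c \leq b$ there is a maximal one, and maximality together with axiom (vii) should force $a+c = b$. Once this coincidence is established, the ACC for $\mathbb{E}(\mathbf{I})$ is immediate, since its induced order is $\leq$ and the ACC for $\leq$ is built into the definition of an effect implication algebra.
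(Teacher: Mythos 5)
Your handling of the effect-algebra axioms coincides with the paper's proof almost verbatim: the bounded poset with antitone involution from (i)--(v) of Definition~\ref{def2}, (E1) from (vi), (E2) from (vii), and (E3), (E4) by the same unfolding of definitions. Your argument that $a+c=b$ implies $a\le b$ (via (viii) and (vi)) is also correct and is exactly the computation used in the lattice case (Theorem~\ref{th2}).

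The genuine gap is the step you yourself flag and then leave open: from $a\le b$ one must produce $c$ with $a+c=b$, and your proposal does not establish this. Your reduction to the identity $(b\rightarrow a)\rightarrow a=b$ for $a\le b$ is the right reformulation, but neither of your two suggested repairs closes it. The maximality argument is circular: if $m=a+c$ is maximal among sums $a+d\le b$ and $m<b$, then to derive a contradiction you need a nonzero $e$ with $m+e$ defined and $m+e\le b$, which is precisely the order-coincidence statement being proved, now for the pair $(m,b)$; the ACC only guarantees that a maximal element exists, it manufactures no such witness, and axiom (vii) (associativity) does not supply one either. Nor do (vi)--(viii) together with antisymmetry visibly yield $(b\rightarrow a)\rightarrow a=b$: tellingly, in the final theorem on the algebraic semantics of $L_{{\rm EA}}$ the authors include exactly this statement as a separate quasiidentity (10), namely $x\rightarrow y=1\Rightarrow(y\rightarrow x)\rightarrow x=y$, over and above the conditions of Definition~\ref{def2}, which strongly suggests it is not a formal consequence of (i)--(viii) alone. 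For comparison, the paper's own proof of Theorem~\ref{th7} dismisses this point in one terse sentence, citing the unnumbered chain condition (called ``(ix)'' there), which indeed gives the ACC once the inclusion of the induced order in $\le$ is known but does not by itself prove the reverse inclusion; so you have correctly located where the real content of the statement lies, but as written your proposal does not prove that the induced order of $\mathbb E(\mathbf I)$ is all of $\le$, and hence the theorem is not fully established.
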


\begin{proof}
Let $a,b,c\in I$. Define
\[
x\leq y\text{ if and only if }x\rightarrow y=1
\]
for all $x,y\in I$. Then, due to (i) -- (v) of Definition~\ref{def2}, $(I,\leq,{}',0,1)$ is a bounded poset with an antitone involution. If $a+b$ is defined then $a\leq b'$ and hence $b\leq a'$, i.e.\ $b+a$ is defined and
\[
a+b=a'\rightarrow b=b'\rightarrow a=b+a
\]
according to (vi) of Definition~\ref{def2}. This shows (E1). Now $a+b$ is defined if and only if $a\leq b'$, and in this case $a+b=a'\rightarrow b$. If $a+b$ is defined then $(a+b)+c$ is defined if and only if $a+b\leq c'$, and in this case
\[
(a+b)+c=(a+b)'\rightarrow c=(a'\rightarrow b)'\rightarrow c.
\]
Hence $(a+b)+c$ is defined if and only if both $a\leq b'$ and $a'\rightarrow b\leq c'$. On the other hand, $b+c$ is defined if and only if $b\leq c'$, and in this case $b+c=b'\rightarrow c$. If $b+c$ is defined then $a+(b+c)$ is defined if and only if $a\leq(b+c)'$, and in this case \[
a+(b+c)=a'\rightarrow(b+c)=a'\rightarrow(b'\rightarrow c).
\]
Hence $a+(b+c)$ is defined if and only if both $b\leq c'$ and $a\leq(b'\rightarrow c)'$. According to (vii) of Definition~\ref{def2}, (E2) holds. Now the following are equivalent:
\begin{align*}
& a+b=1, \\
& a\leq b'\text{ and }a'\rightarrow b=1, \\
& b\leq a'\text{ and }a'\leq b, \\
& b=a'.
\end{align*}
This shows (E3). If $1+a$ is defined then $1\leq a'$ whence $a'=1$, i.e.\ $a=0$ showing (E4). Hence $\mathbb E(\mathbf I)$ is an effect algebra satisfying the ACC whose induced order coincides with $\leq$ because of (ix) of Definition~\ref{def2}.
\end{proof}

The following theorem shows that the correspondence between an effect algebra satisfying the ACC and its corresponding effect implication algebra is almost one-to-one.

\begin{theorem}\label{th8}
The following hold:
\begin{enumerate}[{\rm(i)}]
\item If $\mathbf E$ is an effect algebra satisfying the {\rm ACC} then $\mathbb E(\mathbb I(\mathbf E))=\mathbf E$,
\item if $\mathbf I$ is an effect implication algebra satisfying the identity
\begin{enumerate}[{\rm(11)}]
\item $x\rightarrow y\approx y'\rightarrow\Max L(x',y')$
\end{enumerate}
then $\mathbb I(\mathbb E(\mathbf I))=\mathbf I$.
\end{enumerate}
\end{theorem}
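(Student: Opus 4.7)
The plan is to prove both equalities by direct unfolding of definitions, using the characterizations in Theorem~\ref{th5} for (i), and exploiting the extra identity (11) for (ii). Both directions reduce to showing that the composite operation on each side agrees with the original.

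For (i), write $\mathbb E(\mathbb I(\mathbf E))=(E,\oplus,{}^{*},0,e)$ and identify each symbol in terms of the data of $\mathbf E$. The constant is $e=0^{*}=0\rightarrow 0=0'=1$ by Theorem~\ref{th5}(v). For the unary operation, $a^{*}=a\rightarrow 0=a'$, again by Theorem~\ref{th5}(v). For the partial addition, $a\oplus b$ is defined iff $a\rightarrow b^{*}=1$, i.e.\ iff $a\rightarrow b'=1$, which by Theorem~\ref{th5}(i) is equivalent to $a\leq b'$, and by Lemma~\ref{lem1}(iii) to $a+b$ being defined in $\mathbf E$. When defined, $a\oplus b=a^{*}\rightarrow b=a'\rightarrow b$, and Theorem~\ref{th5}(ii) then gives $a'\rightarrow b=a+b$. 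Hence $\mathbb E(\mathbb I(\mathbf E))=\mathbf E$.

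For (ii), write $\mathbb I(\mathbb E(\mathbf I))=(I,\Rightarrow,0)$ and fix $c,d\in I$. By the definition of $\mathbb I$,
\[
c\Rightarrow d \;=\; d+\Max L(c',d'),
\]
where $+$ and $\leq$ are those of $\mathbb E(\mathbf I)$. By Theorem~\ref{th7}, $\leq$ coincides with the relation given by $x\rightarrow y=1$ in $\mathbf I$, and $x'=x\rightarrow 0$ is the same on both sides, so the set $\Max L(c',d')$ is an intrinsic object of $\mathbf I$. For each $x\in\Max L(c',d')$ one has $x\leq d'$, hence $d\rightarrow x'=1$, so $d+x$ is defined in $\mathbb E(\mathbf I)$ and equals $d'\rightarrow x$ computed in $\mathbf I$. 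Therefore
\[
c\Rightarrow d \;=\; \{d'\rightarrow x \mid x\in\Max L(c',d')\} \;=\; d'\rightarrow\Max L(c',d'),
\]
and invoking the hypothesis (11) gives $c\Rightarrow d=c\rightarrow d$, so $\mathbb I(\mathbb E(\mathbf I))=\mathbf I$.

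The only delicate point, and the reason the extra identity (11) is needed in (ii), is the status of the right-hand side of (11): the left-hand side $c\rightarrow d$ is a single element while the right-hand side is a priori a set $\{d'\rightarrow x:x\in\Max L(c',d')\}$, so (11) must be read as asserting that this set is the singleton $\{c\rightarrow d\}$ (consistent with the paper's convention of identifying a singleton with its element). Nothing in Definition~\ref{def2} alone controls the behaviour of $\rightarrow$ on $\Max L$-sets, so (11) supplies exactly the missing link; without it, $\mathbb I(\mathbb E(\mathbf I))$ might genuinely be multi-valued and differ from $\mathbf I$. Everything else in both arguments is a routine substitution using already-established equivalences.
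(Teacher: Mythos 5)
Your proposal is correct and follows essentially the same route as the paper: part (i) by unfolding $e$, ${}^{*}$ and $\oplus$ via Theorem~\ref{th5} (with definedness of $\oplus$ and $+$ both reducing to $a\leq b'$), and part (ii) by the chain $c\Rightarrow d=d+\Max L(c',d')=\{d'\rightarrow x\mid x\in\Max L(c',d')\}=d'\rightarrow\Max L(c',d')=c\rightarrow d$ using identity (11). Your added remarks on why $d+x$ is defined for $x\in\Max L(c',d')$ and on reading (11) as a singleton identification only make explicit what the paper leaves implicit.
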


\begin{proof}
Let $\mathbf E=(E,+,{}',0,1)$ be an effect algebra satisfying the ACC, put $\mathbb I(\mathbf E)=(E,\rightarrow,0)$ and $\mathbb E(\mathbb I(\mathbf E))=(E,\oplus,{}^*,0,e)$ and let $a,b\in E$. Then $a\oplus b$ is defined if and only if so is $a+b$ since both are equivalent to $a\leq b'$, and in this case
\begin{align*}
a\oplus b & =a'\rightarrow b=a+b, \\
      a^* & =a\rightarrow0=a', \\
        e & =0\rightarrow0=0'=1
\end{align*}
according to Theorem~\ref{th5} showing $\mathbb E(\mathbb I(\mathbf E))=\mathbf E$. Conversely, let $\mathbf I=(I,\rightarrow,0)$ be an effect implication algebra satisfying identity (11), put $\mathbb E(\mathbf I)=(I,+,{}',0,1)$ and $\mathbb I(\mathbb E(\mathbf I))=(I,\Rightarrow,0)$ and let $c,d\in I$. Then
\begin{align*}
c\Rightarrow d & =d+\Max L(c',d')=\{d+x\mid x\in\Max(c',d')\}=\{d'\rightarrow x\mid x\in\Max(c',d')\}= \\
& =d'\rightarrow\Max L(c',d')=c\rightarrow d
\end{align*}
according to (10). This shows $\mathbb I(\mathbb E(\mathbf I))=\mathbf I$.
\end{proof}

Theorem~\ref{th8} (i) says that every effect algebra satisfying the ACC is fully determined by its derived effect implication algebra. This enables us to set up the semantics and axiomatization for finite effect algebras which need not be lattice-ordered in a way similar to that for lattice effect algebras.

\section{The logic of finite effect algebras}

In this section we use the same general theory taken from \cite{BP} as in Sections~3 and 4. Hence, we need not repeat all what was said in the beginning of these two sections. All what was described in Propositions~\ref{prop1} and \ref{prop2} remains valid also here with the same equivalence system.

By the propositional logic $L_{{\rm EA}}$ in a language $\mathcal L=\{\rightarrow,0\}$ ($\rightarrow$ is of rank 2, $0$ is of rank $0$) we understand a consequence relation $\vdash_{{\rm EA}}$ (or $\vdash$, in brief) satisfying the axioms
\begin{enumerate}[(B1)]
\item $\vdash\varphi\rightarrow(\psi\rightarrow\varphi)$,
\item $\vdash\varphi\rightarrow\varphi$,
\item $\vdash\varphi\leftrightarrow(\varphi\rightarrow0)\rightarrow0$,
\item $\vdash0\rightarrow\varphi$
\end{enumerate}
and the rules
\begin{enumerate}
\item[(MP)] $\varphi,\varphi\rightarrow\psi\vdash\psi$,
\item[(Sf)] $\varphi\rightarrow\psi\vdash(\psi\rightarrow\chi)\rightarrow(\varphi\rightarrow\chi)$,
\item[(WPf)] $\varphi\rightarrow\psi,\psi\rightarrow\varphi \vdash(\chi\rightarrow\varphi)\rightarrow(\chi\rightarrow\psi)$,
\item[(R1)] $\varphi\rightarrow\psi\vdash(\neg\varphi\rightarrow\neg\psi)\rightarrow(\psi\rightarrow\varphi)$,
\item[(R2)] $\varphi\rightarrow\neg\psi,(\neg\varphi\rightarrow\psi)\rightarrow\neg\chi\vdash(\neg(\neg\varphi\rightarrow\psi)\rightarrow \chi)\rightarrow (\neg\varphi\rightarrow(\neg\psi\rightarrow\chi))$,
\end{enumerate}
where $\neg\varphi:=\varphi\rightarrow0$ and $1:=\neg0=0\rightarrow0$.

This axiom system will be referred to as system $\mathbf B$. We can prove the following consequences of $\mathbf B$.
 
\begin{theorem}
In the propositional logic $L_{{\rm EA}}$ the following are provable:
\begin{enumerate}[{\rm(a)}]
\item $\varphi\rightarrow\psi,\psi\rightarrow\chi\vdash\varphi\rightarrow\chi$,
\item $\vdash\varphi\rightarrow1$,
\item $\vdash\neg\neg\varphi\leftrightarrow\varphi$,
\item $\varphi\rightarrow\psi\vdash\neg\varphi\rightarrow\neg\psi\leftrightarrow\psi\rightarrow\varphi$.
\end{enumerate}
\end{theorem}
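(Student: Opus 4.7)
My plan is to mirror the corresponding parts of the proof of Theorem~\ref{th9} in Section~3. All five inference rules (MP), (Sf), (WPf), (R1), (R2) are identical between $L_{{\rm LEA}}$ and $L_{{\rm EA}}$, and the axioms (B1), (B4) coincide with (A1), (A3); the new axioms (B2) and (B3) compensate for the absence of (A2) by directly supplying facts that were previously derived. Parts (a), (b), (d) will follow the schemes of (a), (b), (f) of Theorem~\ref{th9}, while (c) will be essentially a restatement of axiom (B3).

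For (a), applying (Sf) to $\varphi\rightarrow\psi$ yields $(\psi\rightarrow\chi)\rightarrow(\varphi\rightarrow\chi)$, and then (MP) with the second hypothesis $\psi\rightarrow\chi$ gives $\varphi\rightarrow\chi$. For (b), one first notes that $\vdash1$, i.e.\ $\vdash0\rightarrow0$, is the $\varphi:=0$ instance of (B2) (alternatively of (B4)); specialising (B1) with $\psi:=1$ yields $\vdash1\rightarrow(\varphi\rightarrow1)$, and (MP) concludes $\vdash\varphi\rightarrow1$. For (c), by definition $\neg\neg\varphi=(\varphi\rightarrow0)\rightarrow0$, so the biconditional in question is literally axiom (B3).

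For (d), under the hypothesis $\varphi\rightarrow\psi$ the direction $(\neg\varphi\rightarrow\neg\psi)\rightarrow(\psi\rightarrow\varphi)$ is exactly the conclusion of (R1). For the converse direction $\vdash(\psi\rightarrow\varphi)\rightarrow(\neg\varphi\rightarrow\neg\psi)$, I would invoke (Sf) with $\chi:=0$, exactly as in the proof of Theorem~\ref{th9}(f); this produces $(\varphi\rightarrow0)\rightarrow(\psi\rightarrow0)$, which unfolds to $\neg\varphi\rightarrow\neg\psi$, from the premise $\psi\rightarrow\varphi$.

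I do not anticipate any serious obstacle here: every step parallels an argument already carried out in Theorem~\ref{th9}, and the axiom (A2) used there but unavailable in $L_{{\rm EA}}$ is never needed for (a)--(d). The only point that demands consistency is the internalized reading of (Sf) used in (d), which should be handled in precisely the same manner as in the proof of Theorem~\ref{th9}(f).
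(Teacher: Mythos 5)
Your proposal is correct and follows essentially the same route as the paper: (a) from (Sf) and (MP), (b) from (B1) together with $\vdash 0\rightarrow 0$ and (MP), (c) as a restatement of (B3), and (d) from (R1) for one direction and (Sf) with $\chi:=0$ for the other, exactly as in Theorem~\ref{th9}. (Only a cosmetic slip: in (b) the instance of (B1) you need is obtained by substituting $1$ for $\varphi$ and $\varphi$ for $\psi$, not ``$\psi:=1$'', but the formula you state, $\vdash 1\rightarrow(\varphi\rightarrow 1)$, is the right one.)
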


\begin{proof}
\
\begin{enumerate}[(a)]
\item This follows from (Sf) and (MP).
\item According to (B1),
\[
\vdash1\rightarrow(\varphi\rightarrow1)
\]
and according to (B4)
\[
\vdash0\rightarrow0,
\]
i.e.
\[
\vdash1.
\]
Thus applying (MP) we conclude
\[
\vdash\varphi\rightarrow1.
\]
\item This is (B3).
\item According to (R1) we have
\[
\varphi\rightarrow\psi\vdash(\neg\varphi\rightarrow\neg\psi)\rightarrow(\psi\rightarrow\varphi).
\]
Moreover,
\[
\vdash(\psi\rightarrow\varphi)\rightarrow(\neg\varphi\rightarrow\neg\psi)
\]
according to (Sf).
\end{enumerate}
\end{proof}

In what follows we show that this algebraic semantics is just that for algebras satisfying conditions (i) -- (viii) of Definition~\ref{def2}, i.e.\ for finite effect implication algebras. Similarly to the lattice case, the system $(\mathcal L,\vdash_{{\rm EA}})$ fulfills the properties of SIC. Again, applying Proposition~\ref{prop2}, we are going to show the following theorem:

\begin{theorem}
The equivalent algebraic semantics for $(\mathcal L,\vdash_{{\rm EA}})$ is axiomatized by the following identities and quasiidentities {\rm(}we abbreviate $x\rightarrow0$ by $x'$ and $0'$ by $1${\rm)}:
\begin{enumerate}[{\rm(1)}]
\item $x\rightarrow(y\rightarrow x)\approx1$,
\item $x\rightarrow x''\approx1$ and $x''\rightarrow x\approx1$,
\item $0\rightarrow x\approx1$,
\item $x\rightarrow x\approx1$,
\item $x=x\rightarrow y=1\Rightarrow y=1$,
\item $x\rightarrow y=1\Rightarrow(y\rightarrow z)\rightarrow(x\rightarrow z)=1$,
\item $x\rightarrow y=1\Rightarrow(x'\rightarrow y')\rightarrow(y\rightarrow x)=1$,
\item $x\rightarrow y=y\rightarrow x=1\Rightarrow x=y$,
\item $x\rightarrow y'=(x'\rightarrow y)\rightarrow z'=1\Rightarrow((x'\rightarrow y)'\rightarrow z)\rightarrow (x'\rightarrow (y'\rightarrow z))=1$,
\item $x\rightarrow y=1\Rightarrow(y\rightarrow x)\rightarrow x=y$.
\end{enumerate}
\noindent
This system just corresponds to properties {\rm(i)} -- {\rm(viii)} of Definition~\ref{def2}.
\end{theorem}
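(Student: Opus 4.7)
The plan is to follow the two-step template established in Theorem~\ref{th10}. In step one, apply Proposition~\ref{prop2} to the deductive system $(\mathcal{L},\vdash_{{\rm EA}})$ using the equivalence formulas $\Delta=\{p\rightarrow q,q\rightarrow p\}$ and the defining identity $p\approx p\rightarrow p$; as in the lattice case, the fact that $p\rightarrow p=1$ is provable forces $\varepsilon(p)=1$. Each axiom $(B1)$--$(B4)$ then becomes an identity of the form $\varphi\approx1$: $(B1)$ yields $(1)$, $(B2)$ yields $(4)$, the biconditional $(B3)$ splits into the pair $(2)$, and $(B4)$ yields $(3)$. Each inference rule contributes the corresponding quasiidentity: $(MP)\mapsto(5)$, $(Sf)\mapsto(6)$, $(R1)\mapsto(7)$, $(R2)\mapsto(9)$, while $\Delta$ gives $(8)$. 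The quasiidentity $(10)$ supplies the remaining fact needed to encode associativity (E2) in the absence of a lattice structure, playing the role filled in the lattice case by axiom $(A2)$ together with the identity $(x'\rightarrow y)\rightarrow y\approx x'\vee y$.

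In step two, one verifies that $(1)$--$(10)$ together yield conditions $(i)$--$(viii)$ of Definition~\ref{def2}. Most items proceed as in the proof of Theorem~\ref{th10}: $(i)$ consists of $(3)$, $(4)$, and the consequence $x\rightarrow1\approx1$ obtained from $(1)$ by taking $y:=x$; $(ii)$ is $(8)$; $(iii)$ follows from $(6)$ combined with $(5)$; $(iv)$ is $(6)$ specialized with $z:=0$; $(v)$ follows from $(2)$ and $(8)$; and $(viii)$ is literally $(1)$. Condition $(vi)$ mirrors the handling of $(ix)$ in Theorem~\ref{th10}: applying $(7)$ to the hypothesis $x\rightarrow y=1$ yields $x'\rightarrow y'\leq y\rightarrow x$; the already-derived $(iv)$ and $(v)$ give $y'\rightarrow x'=1$; a second application of $(7)$ to this gives $y\rightarrow x\leq x'\rightarrow y'$; and $(8)$ delivers equality.

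The main obstacle is condition $(vii)$, whose first part is the associativity biconditional and whose second part is the equation $(x'\rightarrow y)'\rightarrow z=x'\rightarrow(y'\rightarrow z)$. The strategy is to mimic the proof of $(x)$ in Theorem~\ref{th10}, using $(10)$ in place of the join identity exploited there. Assuming $x\rightarrow y'=(x'\rightarrow y)\rightarrow z'=1$: the inequality $y\leq x'\rightarrow y\leq z'$ (using $(1)$) yields $y\rightarrow z'=1$; from $x\leq y'$ via $(iv)$ and $(v)$ one obtains $y\leq x'$; then $(10)$ gives $(x'\rightarrow y)\rightarrow y=x'$; combining with $(6)$ applied to $x'\rightarrow y\leq z'$ gives $z'\rightarrow y\leq x'$; and the instance $z'\rightarrow y=y'\rightarrow z$ of $(vi)$ (using $y\leq z'$) yields $y'\rightarrow z\leq x'$, i.e., $x\rightarrow(y'\rightarrow z)'=1$. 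The converse biconditional direction follows by the same argument after interchanging $x$ and $z$, and the equation is obtained by applying $(9)$ once as stated and once in the swapped form. The decisive new ingredient relative to Theorem~\ref{th10} is that $(10)$ provides exactly the replacement $(x'\rightarrow y)\rightarrow y=x'$ for the missing join.
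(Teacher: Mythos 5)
Your proposal is correct and follows essentially the same route as the paper: the paper likewise treats the application of Proposition~\ref{prop2} as implicit and devotes its proof to verifying that (1)--(10) yield conditions (i)--(viii) of Definition~\ref{def2}, with exactly your argument in each item, including the use of (10) to obtain $(x'\rightarrow y)\rightarrow y=x'$ in the proof of (vii) (after first deriving $y\leq x'$, which you make more explicit than the paper) and the same interchange of $x$ and $z$ for the converse direction and for the second application of (9). Your candid remark that (10) does not come from any axiom or rule of system $\mathbf B$ but is simply adjoined mirrors the paper, which also leaves the provenance of (10) (and the redundancy of the (WPf)-quasiidentity and of $\delta(p\Delta p)\approx\varepsilon(p\Delta p)$) unaddressed, so no further divergence to report.
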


\begin{proof}
We will show that the corresponding algebra satisfies the conditions listed in Definition~\ref{def2}.
\begin{enumerate}[(i)]
\item The first two identities of (i) are just (3) and (4) and the remaining one follows from (4) and (1).
\item This is just (8).
\item Assume $x\rightarrow y=y\rightarrow z=1$. Then according to (6) we have $(y\rightarrow z)\rightarrow(x\rightarrow z)=1$ which according to (5) yields $x\rightarrow z=1$.

(i) -- (iii) show that the relation $\leq$ defined by $x\leq y$ if and only if $x\rightarrow y=1$ is a partial order relation with smallest element $0$ and greatest element $1$.
\item This follows from (6).
\item This follows from (2) and (8).
\item Assume $x\leq y$. Then $y'\leq x'$ according to (6) and
\[
y\rightarrow x=y''\rightarrow x''\leq x'\rightarrow y'\leq y\rightarrow x
\]
according to (v) and (7).
\item Assume  $x\rightarrow y'=(x'\rightarrow y)\rightarrow z'=1$, i.e. $x\leq y'$ and $x'\rightarrow y\leq z'$. We know by (1) that $y\leq x'\rightarrow y$, thus $y\leq z'$. Now, we can apply (7) and (v) to obtain $y'\rightarrow z=z'\rightarrow y$. Using (6) and (10), $x'\rightarrow y\leq z'$ yields $z'\rightarrow y\leq (x'\rightarrow y)\rightarrow y=x'$. This shows that $y'\rightarrow z\leq x'$ which due to (iv) gives $x\leq (y'\rightarrow z)'$. The converse implication follows by interchanging the elements $x,y,z$. Finally, under the above assumptions we have by (9) $(x'\rightarrow y)'\rightarrow z\leq x'\rightarrow (y'\rightarrow z)$. Again, by interchanging the elements $x,y,z$ we obtain the converse inequality and thus equality.
\item  This is just (1).
\end{enumerate}
\end{proof}

Due to the correspondence described in Theorem~\ref{th8}, our propositional logic $L_{{\rm EA}}$, i.e.\ system $\mathbf B$, is an algebraic axiomatization in Gentzen style of the logic of finite effect algebras which need not be lattice-ordered. A possible extension to effect algebras satisfying ACC could make problems since the last condition of Definition~\ref{def2} (mentioned after condition (viii)) equivalent to ACC cannot be easily described in our logic $L_{{\rm EA}}$.

Compliance with Ethical Standards: This study was funded by \"OAD, project CZ~02/2019, concerning the first and second author by IGA, project P\v rF~2020~014, and concerning the first and third author by the Austrian Science Fund (FWF), project I~4579-N, and the Czech Science Foundation (GA\v CR), project 20-09869L. The authors declare that they have no conflict of interest. This article does not contain any studies with human participants or animals performed by any of the authors.

Authors' addresses:

Ivan Chajda \\
Palack\'y University Olomouc \\
Faculty of Science \\
Department of Algebra and Geometry \\
17.\ listopadu 12 \\
771 46 Olomouc \\
Czech Republic \\
ivan.chajda@upol.cz

Radom\'ir Hala\v s \\
Palack\'y University Olomouc \\
Faculty of Science \\
Department of Algebra and Geometry \\
17.\ listopadu 12 \\
771 46 Olomouc \\
Czech Republic \\
radomir.halas@upol.cz

Helmut L\"anger \\
TU Wien \\
Faculty of Mathematics and Geoinformation \\
Institute of Discrete Mathematics and Geometry \\
Wiedner Hauptstra\ss e 8-10 \\
1040 Vienna \\
Austria, and \\
Palack\'y University Olomouc \\
Faculty of Science \\
Department of Algebra and Geometry \\
17.\ listopadu 12 \\
771 46 Olomouc \\
Czech Republic \\
helmut.laenger@tuwien.ac.at
\end{document}